\theoremstyle{plain}
\newtheorem*{theorem*}{Theorem}
\newtheorem*{lemma*} {Lemma}
\newtheorem*{corollary*} {Corollary}
\newtheorem*{proposition*} {Proposition}
\newtheorem{theorem}{Theorem}[section]
\newtheorem{lemma}[theorem]{Lemma}
\newtheorem{corollary}[theorem]{Corollary}
\newtheorem{proposition}[theorem]{Proposition}
\theoremstyle{remark}
\newtheorem*{remark}{Remark}
\theoremstyle{definition}
\def \R {\Bbb{R}}
\def \Z {\Bbb{Z}}
\def \C {\Bbb{C}}
\def \II{\mathcal{I}}\def \NN{\mathcal{N}} \def \AA{\mathcal{A}}
\def \GG{\mathcal{G}}
\def\op{\operatorname}
\def\dt{\,dt}
\def\wti{\widetilde}
\def\eps{\epsilon}
\def\s{\sigma}
\def\Q{\Bbb{Q}}
\def\id{\mbox{id}}
\def\sign{\mbox{sign}}
\def\Z{\Bbb{Z}}
\def\C{\Bbb{C}}
\def\N{\Bbb{N}}
\def\l{\lambda}
\def\part{\partial}
\def\ll{\langle}
\def\rr{\rangle}
\def\a{\alpha}
\def\bp{\begin{pmatrix}}
\def\sm{\setminus}
\def\ep{\end{pmatrix}}
\def\bn{\begin{enumerate}}
\def\en{\end{enumerate}}
\def\ba{\begin{array}}
\def\ea{\end{array}}
\def\s{\sigma}
\def\a{\alpha}
\def\b{\beta}
\def\fr12{\frac{1}{2}}
\def\ker{\mbox{Ker}}
\def\be{\begin{equation} }
 \def\ee{\end{equation}}
\def\ord{\mbox{ord}}
\def\v{\varphi}
\def\G{\Gamma}
\def\trace{\mbox{trace}}
\def\genus{\mbox{genus}}
\def\zt{\Z[t^{\pm 1}]}
\def\cmtbf#1{} \def\cmt#1{}
\begin{document}

\title{Commensurability of knots and $L^2$--invariants}
\author{Stefan Friedl}
\address{Mathematisches Institut\\ Universit\"at zu K\"oln\\   Germany}
\email{sfriedl@gmail.com}

\date{\today}

\begin{abstract}
We show that the $L^2$-torsion and the von Neumann $\rho$-invariant  give rise to commensurability invariants of knots.
 \end{abstract}

\maketitle



\section{Introduction}
Two 3--manifolds $N_1$ and $N_2$ are called \emph{commensurable} if there exists
a diffeomorphism between some finite covers of $N_1$ and $N_2$.
We say that two oriented 3--manifolds
$N_1$ and $N_2$ are \emph{orientation--preserving commensurable} if there exists an orientation preserving
diffeomorphism between some finite covers of $N_1$ and $N_2$.

Given a knot $K\subset S^3$ we denote by $X(K)=S^3\sm \nu K$ the exterior of $K$. We endow $X(K)$ with the orientation inherited from the orientation of $S^3$.
We say that two knots $K_1$ and $K_2$ in $S^3$ are (orientation--preserving) commensurable,
if their exteriors  are (orientation--preserving) commensurable.

Note that $H_1(X(K);\Z)\cong \Z$. We denote by $\phi_K\colon \pi_1(X(K))\to \Z$ the unique (up to sign) epimorphism
and given $n\in \N$ we denote by $X(K)_n$ the unique $n$-fold cyclic cover of $X(K)$.
We say that two knots are \emph{cyclically (orientation--preserving) commensurable} if there exists an (orientation--preserving) diffeomorphism between cyclic covers of their exteriors.

Boileau, Boyer, Cebanu and Walsh \cite[Theorem~1.4]{BBCW10} recently proved the following theorem:

\begin{theorem}
Two hyperbolic knots without hidden symmetries which are commensurable are in fact cyclically commensurable.
\end{theorem}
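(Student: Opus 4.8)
The plan is to pass to the common commensurator and exploit that the absence of hidden symmetries forces both knot groups to be normal in it. Write $X(K_i)=\mathbb{H}^{3}/\Gamma_i$ with $\Gamma_i\subset \op{Isom}(\mathbb{H}^{3})$ discrete and torsion free. A hyperbolic knot complement is non-arithmetic unless $K_i$ is the figure-eight knot, and the figure-eight knot \emph{does} have hidden symmetries; so under our hypotheses each $\Gamma_i$ is non-arithmetic and Margulis' theorem applies: the commensurator $C_i:=\op{Comm}(\Gamma_i)$ is again a lattice containing $\Gamma_i$ with finite index. Commensurability of $K_1$ and $K_2$ means that $\Gamma_1$ and $\Gamma_2$ have conjugate finite-index subgroups, so after conjugating we may assume $C_1=C_2=:C$, and $O_{\min}:=\mathbb{H}^{3}/C$ is the common minimal orbifold of the class. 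The hypothesis of no hidden symmetries says precisely that $C=N(\Gamma_i)$, the normalizer, for each $i$; hence $\Gamma_1\triangleleft C$ and $\Gamma_2\triangleleft C$, and $C/\Gamma_i\cong \op{Isom}(X(K_i))$.

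Next I would analyse the single cusp of $O_{\min}$ (it has one cusp because each $X(K_i)$ does and the covers are cusp preserving). By the Neumann--Reid dichotomy, $K_i$ has hidden symmetries if and only if this cusp is \emph{rigid}, i.e. Euclidean $S^{2}(2,4,4)$, $S^{2}(3,3,3)$ or $S^{2}(2,3,6)$; so here the cross-section is either a torus or the pillowcase $S^{2}(2,2,2,2)$, and in particular its fundamental group has a rank-two free abelian part $L$. Let $P_i=\Gamma_i\cap P\cong\mathbb{Z}^{2}$ be the peripheral subgroup of $\Gamma_i$, generated by the meridian $\mu_i$ and the longitude $\lambda_i$. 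The key geometric input is half-lives-half-dies on the one-cusped orbifold $O_{\min}$: the restriction $H^{1}(O_{\min};\mathbb{Q})\to H^{1}(L;\mathbb{Q})$ has one-dimensional image. Since $[\Gamma_i,\Gamma_i]$ is characteristic in $\Gamma_i$ it is normal in $C$, and the quotient $C/[\Gamma_i,\Gamma_i]$ is virtually cyclic; modding out its maximal finite normal subgroup, the abelianisation $\phi_{K_i}$ extends (possibly after passing to a subgroup of index two) to a homomorphism $\psi_i\colon C\to\mathbb{Z}$ with $\psi_i(\mu_i)\neq0$ and $\psi_i(\lambda_i)=0$. Their restrictions to $L$ then lie in that one-dimensional image and are consequently proportional, and since $\lambda_i$ spans the kernel line of $\psi_i|_L$, I conclude that $\lambda_1$ and $\lambda_2$ are parallel in $L\otimes\mathbb{Q}$: the two longitudes point in the same direction in the cusp of $O_{\min}$.

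Finally I would convert parallel longitudes into a common cyclic cover. The $n$-fold cyclic cover $X(K_i)_n$ unwraps the cusp in the meridian direction, its peripheral lattice being $\langle n\mu_i,\lambda_i\rangle$, so it is governed by the longitudinal direction that is held fixed. Because $\lambda_1$ and $\lambda_2$ span the same line, the towers $\{X(K_1)_n\}$ and $\{X(K_2)_m\}$ become cofinal in a single tower of cyclic orbifold covers of $O_{\min}$, and I would match a term of each by comparing $\Gamma_1^{(n_1)}=\phi_{K_1}^{-1}(n_1\mathbb{Z})$ and $\Gamma_2^{(n_2)}=\phi_{K_2}^{-1}(n_2\mathbb{Z})$ as subgroups of $C$, choosing $n_1,n_2$ so that they coincide (or are conjugate in $\op{Isom}(\mathbb{H}^{3})$); this yields a diffeomorphism $X(K_1)_{n_1}\cong X(K_2)_{n_2}$. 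The orientation-preserving refinement follows from the same argument inside $\op{Isom}^{+}(\mathbb{H}^{3})$, using that the meridian slope is preserved by every self-diffeomorphism of a knot exterior (Gordon--Luecke).

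I expect the genuine obstacle to be this last matching step. Proportionality of $\psi_1$ and $\psi_2$ was only established after restriction to the peripheral subgroup, and upgrading it to a relation between the subgroups $\Gamma_i^{(n)}$ over all of $C$ requires controlling the classes in $H^{1}(O_{\min};\mathbb{Q})$ that die on the cusp. Handling the pillowcase cusp $S^{2}(2,2,2,2)$ — where $P$ carries an extra involution that can interchange slopes — and verifying that the residual symmetry in $C/\Gamma_i$ is transverse to the cyclic direction rather than mixing meridian and longitude, is where the no-hidden-symmetries hypothesis must be used most carefully.
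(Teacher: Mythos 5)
A preliminary remark: this statement is not proved in the paper you were given at all; it is quoted from Boileau--Boyer--Cebanu--Walsh \cite{BBCW10} (their Theorem~1.4), so your attempt can only be measured against their argument. Your setup does reproduce the opening of that argument faithfully: non-arithmeticity (Reid's theorem plus the fact that the figure-eight knot has hidden symmetries), Margulis' theorem making $C=\op{Comm}(\Gamma_i)$ a lattice, the identification $C=N(\Gamma_i)$ in the absence of hidden symmetries, hence $\Gamma_i\trianglelefteq C$, and the Neumann--Reid dichotomy forcing a flexible cusp. One secondary error before the main one: in the pillowcase case your half-lives-half-dies input fails as stated, because a strong inversion acts as $-1$ on $H_1(X(K_i);\Q)$, so $H^1(O_{\min};\Q)=0$ and its image in $H^1(L;\Q)$ is zero-, not one-dimensional; one must pass to a torus-cusped double cover, and since that cover is defined in terms of $\Gamma_i$ one must also show it does not depend on $i$.

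The genuine gap is the final matching step, and it is not a loose end to be tightened: it is the entire content of the theorem, and nothing you have established can produce it. Work in the torus-cusp case, where your $\psi_i\colon C\to\Z$ exist; since $b_1(O_{\min})=1$, the space $H^1(C;\Q)$ is one-dimensional, so $\psi_1$ and $\psi_2$ are \emph{automatically} proportional and we may use a single $\psi\colon C\to \Q$ with $\psi|_{\Gamma_i}$ a nonzero multiple of $\phi_{K_i}$ for both $i$ (in particular, the longitude parallelism you derive carries no information beyond this). Now observe: each $\phi_{K_i}^{-1}(n_i\Z)=\Gamma_i\cap\psi^{-1}(c_in_i\Z)$ is an intersection of subgroups normal in $C$, hence is itself normal in $C$; and any $g\in\op{Isom}(\mathbb{H}^3)$ conjugating $\phi_{K_1}^{-1}(n_1\Z)$ to $\phi_{K_2}^{-1}(n_2\Z)$ conjugates the common commensurator $C$ of these groups to itself, so $g\in N(C)\subseteq\op{Comm}(C)=C$, and for subgroups normal in $C$ conjugacy collapses to equality. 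Thus a diffeomorphism $X(K_1)_{n_1}\cong X(K_2)_{n_2}$ forces $\phi_{K_1}^{-1}(n_1\Z)=\phi_{K_2}^{-1}(n_2\Z)$, and intersecting both sides with $\ker\psi$ (on $\Gamma_i$ one has $\psi(\gamma)=0$ iff $\phi_{K_i}(\gamma)=0$) forces $\ker\phi_{K_1}=\ker\phi_{K_2}$. So the theorem is \emph{equivalent} to the assertion that the two knots determine the same infinite-cyclic-cover subgroup $\Gamma_1\cap\ker\psi=\Gamma_2\cap\ker\psi$ of $C$. Parallelism of longitudes only gives $\ker\psi_1=\ker\psi_2$; it says nothing about whether the two finite-index subgroups $\Gamma_i\cap\ker\psi$ of $\ker\psi$ coincide, and your closing paragraph, while correctly sensing trouble, misplaces it (pillowcase involutions, classes dying on the cusp): the problem persists in the torus-cusped, strong-inversion-free case. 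Closing this hole is where Boileau--Boyer--Cebanu--Walsh do their real work: they prove fiberedness of the knots (their Theorem~1.7) in tandem with Theorem~1.4, descend the fibration to the minimal orbifold, and identify the fiber subgroups $\Gamma_i\cap\ker\psi$ inside the fundamental group of the fiber orbifold by analyzing the cyclic deck group actions on it and on its single puncture. Your sketch contains no substitute for that analysis.
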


We will not give a definition of `hidden symmetries' for a hyperbolic manifold but we point out that hyperbolic knots with hidden symmetries are `rare'. More precisely, the only hyperbolic knots which are known to have hidden symmetries are the Figure 8 knot and the two
dodecahedral knots of Aitchison and Rubinstein (see \cite{AR92} and Section \ref{section:ar92}).
 In fact it is conjectured (see \cite{NR92} and \cite[Conjecture~1.3]{BBCW10}) that these are the only hyperbolic knots with hidden symmetries. We refer to \cite{NR92} for some strong evidence towards this conjecture.
In light of the above we will restrict ourselves to the study of cyclic commensurability of knots. We refer to \cite{RW08}, \cite{GHH08}, \cite{Ho10} and \cite{BBCW10} for examples and background on cyclically commensurable knots.

In this note we will show that the Mahler measure $m(\Delta_K(t))$ of the Alexander polynomial $\Delta_K(t)$ of a knot and the integral over the Levine--Tristram signatures give obstructions to two knots being cyclically (orientation--preserving) commensurable.
Given a knot $K\subset S^3$ we define $\tau(K)\in \R$ to be the logarithm of the Mahler measure of the Alexander polynomial $\Delta_K(t)$, i.e.
\[\tau(K):=\ln(m(\Delta_K(t)))=\int_{z\in S^1}\ln|\Delta_K(z)|.\]
(Here and throughout this paper we equip $S^1$ with its Haar measure, i.e. the length is normalized to one.)
Note that if $A$ is a Seifert matrix for $K$, then an elementary calculation (see Section \ref{section:mahler}) shows that:
\[ \tau(K)=\int_{z\in S^1}\ln|\det(A(1-z)+A^t(1-\bar{z}))|.\]
Analogously, we define $\rho(K)$ as the integral of the Levine--Tristram signatures (see \cite{Le69,Tr69}), i.e.
\[  \rho(K) :=\int_{z\in S^1} \sign(A(1-z)+A^t(1-\bar{z})).\]
In Section \ref{section:mahler} we will see that both invariants can be computed easily from the zeros of the Alexander polynomial and finitely many calculations over the integers involving the Seifert matrix.

We say that a knot $K$ is \emph{admissible} if no root of unity is a zero of its Alexander polynomial.
It is well-known (see Lemma  \ref{lem:b1}) that a knot is admissible if and only if $b_1(X(K)_n)=1$ for any $n$.
It is straightforward to verify that if two admissible knots are cyclically commensurable, then they have the same
the genus and  the same degree of the Alexander polynomial and either both knots are fibered or neither is fibered (see Lemma \ref{mainlem} for details).
Our main theorem gives more subtle obstructions to knots being cyclically commensurable.

\begin{theorem}\label{mainthm}
Let $K_1$ and $K_2$ be two cyclically commensurable knots such that at least one of them is admissible.
Suppose that there exists a diffeomorphism
\[\varphi\colon X(K_1)_{n_1}\to X(K_2)_{n_2},\]
then there exist
 $m_1,m_2\in \Z$  such that
\[ \ba{rcl} n_1\cdot \tau(K_1)&=&n_2\cdot \tau(K_2), \\
n_1\cdot \rho(K_1)+m_1&=&\eps\cdot n_2\cdot \rho(K_2)+m_2,\ea \]
where $\eps=1$ if $\varphi$ is orientation preserving and $\eps=-1$ if $\varphi$ orientation reversing.
\end{theorem}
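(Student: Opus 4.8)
The plan is to reinterpret the two quantities attached to a knot as $L^2$--invariants of the exterior taken with respect to the abelianization $\phi_K\colon\pi_1(X(K))\to\Z$, and then to play off two properties of these invariants against one another: their behaviour under diffeomorphism and their behaviour under passage to cyclic covers. Concretely, $\tau$ is to be read as the $L^2$--torsion and $\rho$ as the von Neumann $\rho$--invariant of the pair $(X(K),\phi_K)$. The first thing I would record are the two identifications over the group von Neumann algebra $\mathcal{N}(\Z)=L^\infty(S^1)$: the Fuglede--Kadison determinant of a matrix $M(t)$ over $\Z[t^{\pm1}]$ equals $\exp\big(\int_{S^1}\ln|\det M(z)|\big)$, so applying this to the Alexander matrix and discarding the factor $(1-t)$ (whose Mahler measure is $0$) identifies $\tau(K)$ with the $L^2$--torsion $\rho^{(2)}(X(K),\phi_K)$, while the standard identification of $L^2$--signatures with integrals of twisted signatures gives $\rho(K)=\int_{S^1}\sigma_{K,\omega}$ as the von Neumann $\rho$--invariant $\rho^{(2)}(X(K),\phi_K)$.

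Next comes the diffeomorphism step. Since $K_1$ is admissible we have $b_1(X(K_1)_{n_1})=1$, and transporting this across $\varphi$ gives $b_1(X(K_2)_{n_2})=1$ as well; hence each cover carries a canonical epimorphism $\phi^{(i)}\colon\pi_1(X(K_i)_{n_i})\to\Z$, unique up to sign, and $\varphi$ matches $\phi^{(1)}$ with $\pm\phi^{(2)}$. Both invariants are diffeomorphism invariants of the pair $(M,\phi)$ and are insensitive to the sign of $\phi$; the $L^2$--torsion is moreover orientation--independent (indeed $\tau(K)=\int\ln|\Delta_K|$ sees only $|\Delta_K|$), whereas the von Neumann $\rho$--invariant changes sign under orientation reversal. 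This yields
\[\tau\big(X(K_1)_{n_1}\big)=\tau\big(X(K_2)_{n_2}\big),\qquad \rho\big(X(K_1)_{n_1}\big)=\eps\cdot\rho\big(X(K_2)_{n_2}\big).\]

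It then remains to compare the invariants of a single cover $X(K)_n$ with those of $K$. The infinite cyclic cover $X_\infty$ of $X(K)$ is also the infinite cyclic cover of $X(K)_n$, now with deck group generated by $t^n$, so the relevant module is the Alexander module of $K$ regarded over $\Z[t^{\pm n}]$. For $\tau$ this gives $\tau(X(K)_n)=n\,\tau(K)$ exactly: the order ideal over $\Z[t^{\pm n}]$ is the norm $\prod_{j=0}^{n-1}\Delta_K(\zeta^{\,j}t)$ (with $\zeta$ a primitive $n$--th root of unity), and since the pushforward of Haar measure under $z\mapsto z^n$ is again Haar measure, a change of variables on $S^1$ turns its log Mahler measure into $n\,\tau(K)$. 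The same change of variables applied to the integer--valued Levine--Tristram signature function only gives $\rho(X(K)_n)=n\,\rho(K)$ up to an integer correction, namely a signed count of the values of $\sigma_{K,\cdot}$ at the $n$--th roots of unity; admissibility guarantees that these roots of unity are not zeros of $\Delta_K$, so the correction is an unambiguous integer. Feeding $\tau(X(K_i)_{n_i})=n_i\tau(K_i)$ and $\rho(X(K_i)_{n_i})=n_i\rho(K_i)+(\text{integer})$ into the two equalities above, and absorbing the integers into $m_1,m_2$, produces the two displayed identities.

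I expect the last step to be the crux: establishing the signature multiplicativity and pinning down that the defect is a genuine integer. In the torsion case the argument is clean because the log Mahler measure is a continuous real--valued quantity on which $z\mapsto z^n$ acts by an exact change of variables; in the signature case the function is a step function with integer jumps, so the cover formula can only hold modulo $\Z$. Equivalently, writing $\rho^{(2)}$ as a signature defect $\sigma^{(2)}(W)-\sigma(W)$ of a bounding $4$--manifold $W$, the $L^2$--term $\sigma^{(2)}$ is multiplicative under the cover while the ordinary signature $\sigma$ is multiplicative only up to a Wall non--additivity term; controlling this integer discrepancy is precisely what the indeterminacy $m_1,m_2$ encodes.
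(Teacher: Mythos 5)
Your $\tau$--statement is handled correctly, and in fact by essentially the same route as the paper's ``classical'' alternative argument (Section \ref{section:powers}): admissibility gives $b_1=1$ on both covers, so the canonical epimorphisms to $\Z$ are matched by $\varphi$ up to sign, and the covering formula $\tau(X(K)_n)=n\cdot\tau(K)$ follows from the norm description of the order of the Alexander module over $\Z[t^{\pm n}]$ together with the change of variables on $S^1$ (this is \cite[Lemma~4.1]{SW06}). The paper's main proof obtains the same identity more formally from the Induction and Restriction properties of $L^2$--torsion (Proposition \ref{prop:l2torsion}), but your version is sound.

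The $\rho$--statement, however, has genuine gaps. First, the Cheeger--Gromov von Neumann $\rho$--invariant used in this paper is defined only for \emph{closed} oriented 3--manifolds, yet you attach it to $X(K)$ and to $X(K)_n$, which have torus boundary; likewise the identification of $\int_{S^1}\sigma(K,z)$ with a von Neumann $\rho$--invariant (\cite[Proposition~5.1]{COT04}) is a statement about the $0$--framed surgery $N(K)$, not the exterior. The paper repairs exactly this point: it Dehn fills each cover $X(K_i)_{n_i}$ along the canonical slope (the generator of $\ker\{H_1(\partial Y_i)\to H_1(Y_i)\}$) to obtain $N(K_i)_{n_i}$, and observes that, the slope being canonical, $\varphi$ extends to a diffeomorphism of the filled manifolds. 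Without this step your equality $\rho\big(X(K_1)_{n_1}\big)=\eps\cdot\rho\big(X(K_2)_{n_2}\big)$ is not even well posed. Second, the covering formula $\rho(X(K)_n)=n\cdot\rho(K)+(\mbox{integer})$, which you yourself flag as the crux, is never established. The change-of-variables argument you invoke is not valid for $\rho$: if the twisted signature function of the cover were the fibrewise sum of the Levine--Tristram function of $K$, the change of variables would give \emph{exactly} $n\cdot\rho(K)$ with no correction at all; the integer correction does not come from ``integer jumps'' of a step function, but from the failure of $\eta$--type invariants to be multiplicative in finite covers. Your fallback sketch --- $\rho=\sigma^{(2)}(W)-\sigma(W)$ with $\sigma^{(2)}$ multiplicative under covers and $\sigma$ integer-valued --- is the right mechanism, and integrality of signatures is indeed all that Theorem \ref{mainthm} needs (as the paper notes when deducing it from Theorem \ref{mainthm2v2}); but it is asserted rather than proven: one needs a $4$--manifold over $\Z$ bounding the closed $0$--surgery, the multiplicativity of $L^2$--signatures under finite covers, and the closed-manifold setting above for the signature-defect description of $\rho$ to apply (the discrepancy is also not a Wall non-additivity term). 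The paper instead proves the precise restriction formula $\rho(N_A,\alpha')=|A|\cdot\big(\rho(N,\alpha)-\rho(N,\varphi)\big)$ for closed $N$ via $\eta$--invariants (Proposition \ref{prop:vonneumann}) and identifies the correction as $\sum_{k}\sigma(K,e^{2\pi ik/n})$, which is where the integers $m_1,m_2$ actually come from.
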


We immediately obtain the following corollary:

\begin{corollary}\label{maincor}
Let $K_1$ and $K_2$ be two cyclically commensurable knots such that at least one of them is admissible.
Then the following hold:
\bn
\item $\tau(K_1)$ and $\tau(K_2)$ are rationally dependent real numbers,
\item $\rho(K_1),\rho(K_2)$ and $1$ are rationally dependent real numbers.
\en
\end{corollary}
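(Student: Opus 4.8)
The plan is to deduce the corollary directly from Theorem \ref{mainthm} by rewriting its two output equalities as nontrivial $\Q$-linear relations. First I would observe that the hypothesis that $K_1$ and $K_2$ are cyclically commensurable means precisely that there is a diffeomorphism $\varphi\colon X(K_1)_{n_1}\to X(K_2)_{n_2}$ between some cyclic covers of the two exteriors, so the hypotheses of Theorem \ref{mainthm} are satisfied. Applying the theorem therefore furnishes integers $m_1,m_2\in\Z$ and a sign $\eps\in\{\pm 1\}$ for which $n_1\cdot\tau(K_1)=n_2\cdot\tau(K_2)$ and $n_1\cdot\rho(K_1)+m_1=\eps\cdot n_2\cdot\rho(K_2)+m_2$.

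For statement (1), I would simply transpose the first equality to read $n_1\cdot\tau(K_1)-n_2\cdot\tau(K_2)=0$. Since $n_1$ and $n_2$ are degrees of cyclic covers they are positive integers, hence nonzero rationals, so this is a nontrivial $\Q$-linear dependence between $\tau(K_1)$ and $\tau(K_2)$; by definition these two real numbers are then rationally dependent.

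For statement (2), I would rearrange the second equality as $n_1\cdot\rho(K_1)-\eps\cdot n_2\cdot\rho(K_2)+(m_1-m_2)=0$. This exhibits a $\Q$-linear combination of the three real numbers $\rho(K_1),\rho(K_2),1$ whose coefficient on $\rho(K_1)$ equals $n_1\neq 0$; the relation is therefore nontrivial, and the three numbers are rationally dependent.

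The only point requiring verification, and hence the sole (minor) obstacle, is that the coefficients $n_1,n_2$ produced by the theorem are genuinely nonzero, since otherwise the linear relations would be vacuous. This is immediate because $n_1$ and $n_2$ are the degrees of finite cyclic covers and are thus at least one. No input beyond Theorem \ref{mainthm} is needed.
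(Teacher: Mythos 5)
Your proposal is correct and matches the paper's approach: the paper states that Corollary \ref{maincor} follows immediately from Theorem \ref{mainthm}, and your rearrangement of the two equalities into nontrivial $\Q$-linear relations (with nontriviality guaranteed by $n_1,n_2\geq 1$) is exactly that immediate deduction, spelled out.
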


\begin{remark}
\bn
\item The invariants $\tau(K)$ and $\rho(K)$ can both be interpreted as $L^2$--invariants, we refer to Section \ref{section:l2} for details.
We will use the view point of $L^2$--invariants to prove our main theorem, this way the proofs of the two statements of Theorem \ref{mainthm} become very similar. In Section \ref{section:classical} we will give an alternative, more elementary, proof of the first statement of  Theorem \ref{mainthm}.
\item The results of Theorem \ref{mainthm} and Corollary \ref{maincor} also hold for knots in integral homology spheres.
The proofs are identical and for the sake of readability we only treat the case of knots in $S^3$.
\item
These results are rather different in spirit to most results in the literature on commensurability of knots and 3--manifolds. It is perhaps not entirely clear how these results can be used to address the most important questions in the study of commensurability. Perhaps one point of interest is that the $\rho$--invariant can be used to show that  certain knots which  are commensurable are not orientation--preserving commensurable.
We refer to Section \ref{section:example2} for such an example.
\item The fact that $L^2$--invariants give rise to commensurability invariants is not entirely surprising. Indeed,  the volume of a hyperbolic 3--manifold is multiplicative under finite covers, hence the quotient of volumes of commensurable hyperbolic 3--manifolds is rational; but the volume can also be interpreted in terms of $L^2$--torsion. We refer to Section \ref{section:morel2} and to \cite{LS99} and \cite[Section~4]{Lu02} for details.
\item It is in general an interesting and challenging problem to determine whether the numbers $\tau(K_1)$ and $\tau(K_2)$ (respectively $\rho(K_1), \rho(K_2)$ and $1$) are rationally independent.Mahler
Dubickas \cite[Corollary~3]{Du02} has shown that if $p(t)$ and $q(t)$ are the minimal polynomials of algebraic units (i.e. algebraic numbers whose inverses are also algebraic)
such that their Mahler measures are different, i.e. if $m(p(t))\ne m(q(t))$, then $m(p(t))/m(q(t))$ is irrational. In particular pairs of knots with such Alexander polynomials are not cyclically commensurable.
\en
\end{remark}

The paper is organized as follows: in Section \ref{section:l2} we will reinterpret $\tau(K)$ and $\rho(K)$ in terms of $L^2$--invariants and we will show that both invariants satisfy properties which we call `induction' and `restriction'. In Section \ref{section:proofs} we will use these two properties to prove Theorem \ref{mainthm}. Finally in Section \ref{section:examples} we study various examples.
In particular we will give an example of a pair of knots where the combination of $\tau(K)$ and $\rho(K)$ determines precisely which finite cyclic covers are diffeomorphic.\\

%
%

\noindent \textbf{Note.}
This note is a revised and shortened version of a longer unpublished manuscript by the author which is available from the author's webpage (see \cite{Fr11}).\\


\noindent \textbf{Conventions.} All manifolds are assumed compact, orientable and connected. All groups are assumed to
be countable.
\\

\noindent \textbf{Acknowledgments.} I would like to thank Steve Boyer, Radu Cebanu, Jonathan Hillman, Walter Neumann, Dan Silver, Peter Teichner, Stefano Vidussi and Genevieve Walsh for helpful conversations regarding this project. I am also grateful to the referee for carefully reading an earlier version of the paper and for suggesting the argument in Section \ref{section:powers}.

\section{$L^2$--invariants}\label{section:l2}

\subsection{The $L^2$--torsion}

Let $N$ be a compact 3--manifold with empty or toroidal boundary and let $\a\colon \pi_1(N)\to G$ be a non--trivial homomorphism to a group.
If the pair $(N,\a)$ is `det--$L^2$--acyclic', then by  \cite[Definition~3.4.1]{Lu02} the $L^2$--torsion $\tau(N,\a)\in \R\sm \{0\}$ is defined.
If $(N,\a)$ is not `det--$L^2$--acyclic', then we define $\tau(N,\a):=0$.

The precise definition of $\tau(N,\a)$ is irrelevant for the understanding of this paper.
The following well--known proposition summarizes the key properties of $\tau(N,\a)$ which we will use.

\begin{proposition}\label{prop:l2torsion}
Let $N$ be a compact 3--manifold with empty or toroidal boundary. Then the following hold:
\bn
\item Let  $\a\colon \pi_1(N)\to G$ be a homomorphism to a group and let  $\b\colon G\to H$ be a monomorphism of  groups, then
\[ \tau(N,\a\colon \pi_1(N)\to G)=\tau(N,\b\circ \a\colon \pi_1(N)\to H) \quad \mbox{ (Induction)}.\]
\item  Let $\a\colon \pi_1(N)\to G$  be a homomorphism to a group
 and let $\varphi\colon \pi_1(N)\to A$ be an epimorphism to a finite group $A$ which factors through $\a$. Denote the corresponding cover by $N_A$. Then
\[ \tau(N_A,\pi_1(N_A)\to \pi_1(N)\xrightarrow{\a}G )=|A|\cdot \tau(N,\a) \quad \mbox{ (Restriction)}.\]
\item Let $f\colon N\to M$ be a diffeomorphism and let $\a\colon \pi_1(M)\to G$ be a homomorphism, then
\[ \tau(N,\a\circ f_*)=\tau(M,\a).\]
\item Let $K\subset S^3$ be a knot, then
\[ \tau(X(K),\phi_K)=\tau(K).\]
\en
\end{proposition}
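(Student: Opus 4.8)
The plan is to split the proposition into its two distinct flavours: items (1)--(3) are formal properties of the $L^2$--torsion valid for any compact $3$--manifold and any coefficient homomorphism, which I would deduce directly from the general theory in \cite{Lu02}, whereas item (4) is a computation specific to knot exteriors and is where I expect the real work to lie. For (1), I would use that a monomorphism $\b\colon G\to H$ induces a trace--preserving inclusion of group von Neumann algebras $\mathcal{N}(G)\hookrightarrow\mathcal{N}(H)$ along which $\mathcal{N}(H)$ is a faithfully flat extension preserving both von Neumann dimension and the Fuglede--Kadison determinant; tensoring the $\Z[\pi_1(N)]$--chain complex of the universal cover up from $\mathcal{N}(G)$ to $\mathcal{N}(H)$ then leaves every $L^2$--Betti number and every determinant of a differential unchanged, so $(N,\a)$ is det--$L^2$--acyclic iff $(N,\b\circ\a)$ is and the two torsions agree. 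Item (3) is essentially formal: a diffeomorphism $f\colon N\to M$ induces an isomorphism of cellular $\Z[\pi_1]$--chain complexes intertwining the systems $\a\circ f_*$ and $\a$, and since the $L^2$--torsion depends only on the simple chain--homotopy type the two values coincide.

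For (2) I would argue by finite--cover multiplicativity. Write $\G=\pi_1(N)$ and $\G_0=\ker(\varphi)=\pi_1(N_A)$, a subgroup of index $|A|$; the hypothesis that $\varphi$ factors through $\a$ gives $\ker(\a)\subseteq\G_0$, which is what makes $\a|_{\G_0}$ the restricted system appearing in the statement. Since $N_A$ and $N$ share the universal cover $\ti N$, the relevant complexes are $\mathcal{N}(G)\otimes_{\Z\G_0}C_*(\ti N)$ and $\mathcal{N}(G)\otimes_{\Z\G}C_*(\ti N)$. Choosing coset representatives in $\Z\G=\Oplus_{a\in A}\Z\G_0\,\ti a$ rewrites each $\G$--equivariant differential as an induced $|A|\times|A|$ block operator over $\mathcal{N}(G)$, and under this induction each Fuglede--Kadison determinant contribution is taken $|A|$ times; hence the alternating sum defining the torsion is multiplied by $|A|$, which is the twisted version of the finite--cover formula in \cite{Lu02}.

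The heart of the proposition is (4). Under the Fourier isomorphism $\mathcal{N}(\Z)\cong L^\infty(S^1)$, the Fuglede--Kadison determinant of multiplication by a nonzero Laurent polynomial $p(t)\in\Z[t^{\pm1}]$ equals its Mahler measure $\exp\big(\int_{z\in S^1}\ln|p(z)|\big)$. Because $\Delta_K(1)=\pm1\neq0$ the Alexander module is $\Q[t^{\pm1}]$--torsion, so the $\phi_K$--twisted $L^2$--Betti numbers of $X(K)$ vanish and $(X(K),\phi_K)$ is det--$L^2$--acyclic, its $L^2$--torsion being computed from the Reidemeister--Milnor torsion $\Delta_K(t)/(t-1)\in\Q(t)$ of the exterior. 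Taking Fuglede--Kadison determinants and using $\int_{z\in S^1}\ln|1-z|=0$, so that $m(t-1)=1$, yields
\[\tau(X(K),\phi_K)=\ln m\!\left(\frac{\Delta_K(t)}{t-1}\right)=\ln m(\Delta_K(t))=\tau(K).\]

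The hard part will be making the last paragraph fully rigorous: one must fix the precise $\mathcal{N}(\Z)$--chain complex of $X(K)$, verify the genuine det--$L^2$--acyclicity condition rather than mere vanishing of $L^2$--Betti numbers, and carry out the Fuglede--Kadison determinant computation together with the bookkeeping of the $(t-1)$ factor. By contrast, (1)--(3) are direct appeals to the structural results of \cite{Lu02}, so I would keep their treatment short and invest the effort in the knot--theoretic computation of (4).
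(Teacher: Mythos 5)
Your proposal is correct and takes essentially the same route as the paper, whose proof consists of citations: L\"uck's Theorem~3.93~(5) and (6) in \cite{Lu02} for the formal properties (1)--(3), and the computation on p.~206 together with Equation~(3.23) of \cite{Lu02} (see also \cite[Equation~8.2]{LZ06}) for (4) --- and the facts you sketch (induction and restriction behaviour of von Neumann dimension and the Fuglede--Kadison determinant, and the identification of $\tau(X(K),\phi_K)$ with the logarithmic Mahler measure of the Milnor--Turaev torsion $\Delta_K(t)/(t-1)$, using $m(t-1)=1$) are precisely the content of those cited results. The caveats you flag, namely verifying determinant class rather than mere vanishing of $L^2$--Betti numbers and the $(t-1)$ bookkeeping, are exactly what the citations take care of, so no idea is missing.
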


The first three properties follow immediately from \cite[Theorem~3.93~(5)~and~(6)]{Lu02} and from the definitions.
The last property follows from \cite[p.~206~and~Equation~(3.23)]{Lu02}, see also \cite[Equation~8.2]{LZ06}.

\subsection{The von Neumann $\rho$--invariant}

Let $N$ be a closed, oriented 3--manifold and let $\a\colon \pi_1(N)\to G$ be a group homomorphism.
Then there exists an invariant $\rho(N,\a)\in \R$, the von Neumann $\rho$--invariant  of the pair $(N,\a)$,
which was introduced by Cheeger and Gromov \cite{CG85}. We also refer to \cite[Section~2]{CT07} for a beautiful introduction and we refer to \cite[p.~313]{CW03} for an alternative definition.
An outline of the definition of the von Neumann $\rho$--invariant is given in the proof of Proposition \ref{prop:vonneumann}.

Before we state the key properties of $\rho(N,\a)$ we will introduce a few more objects.
Given a knot $K\subset S^3$ we denote by
$N(K)$ the 0--framed surgery on $K$. Note that the inclusion induces an isomorphism $\Z\cong H_1(X(K);\Z)\to H_1(N(K);\Z)$. We denote by $\phi_K\colon \pi_1(N(K))\to \Z$ the unique (up to sign) epimorphism.
Furthermore,  given $n\in \N$ we denote by $N(K)_n$
the unique $n$--fold cyclic cover of $N(K)$.
Let $A$ be a Seifert matrix for $K$. Then given $z\in S^1$ the Levine--Tristram signature (see
\cite[p.~242]{Le69} and \cite{Tr69}) of $K$ is defined as follows:
\[  \s(K,z) :=\sign(A(1-z)+ A^t(1-\bar{z})).
       \]
It is well--known that $\s(K,z)$ is independent of the choice of Seifert matrix.

The following proposition shows that the $\rho$--invariant has a striking similarity to the $\tau$--invariant.

\begin{proposition}\label{prop:vonneumann}
Let $N$ be a closed, oriented 3--manifold. Then the following hold:
\bn
\item Let  $\a\colon \pi_1(N)\to G$ be a homomorphism to a group and let  $\b\colon G\to H$ be a monomorphism, then
\[ \rho(N,\a\colon \pi_1(N)\to G)=\rho(N,\b\circ \a\colon \pi_1(N)\to H) \quad \mbox{ (Induction)}.\]
\item  Let $\a\colon \pi_1(N)\to G$  be a homomorphism
 and let $\varphi\colon \pi_1(N)\to A$ be an epimorphism to a finite group $A$ which factors through $\a$. Denote the corresponding cover by $N_A$. Then
\[ \rho(N_A,\pi_1(N_A)\to \pi_1(N)\xrightarrow{\a}G )=|A|\cdot\big( \rho(N,\a)-\rho(N,\varphi)\big) \quad \mbox{ (Restriction)}.\]
\item Let $f\colon N\to M$ be a diffeomorphism of oriented closed 3--manifolds and let $\a\colon \pi_1(M)\to G$ be a homomorphism, then
\[ \rho(N,\a\circ f_*)=\eps\cdot \rho(M,\a),\]
where $\eps=1$ if $f$ is orientation preserving and $\eps=-1$ otherwise.
\item Let $K\subset S^3$ be a knot, then
\[ \rho(N(K),\phi_K)=\rho(K).\]
If $\a\colon \pi_1(N(K))\to \Z/n$ is an epimorphism, then
\[ \rho(N(K),\a)=\frac{1}{n}\cdot \sum_{k=1}^{n}\s(K,e^{2\pi ik/n}).\]
\item  If $K^r$ is the mirror image of $K$, then $\rho(K^r)=-\rho(K)$.
\en
\end{proposition}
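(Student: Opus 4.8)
The plan is to work throughout with the description of the von Neumann $\rho$--invariant as an $L^2$--signature defect of a bounding $4$--manifold, which I will recall first (this also serves as the promised outline of the definition). Given a pair $(N,\a\colon\pi_1(N)\to G)$, the Cheeger--Gromov construction \cite{CG85} guarantees that some multiple $c\cdot(N,\a)$ bounds a pair $(W,\Psi\colon\pi_1(W)\to G)$ with $\Psi$ restricting to $\a$ on each copy of $N$, and one sets $\rho(N,\a)=\tfrac1c\big(\sigma^{(2)}_G(W,\Psi)-\sigma(W)\big)$, where $\sigma^{(2)}_G$ is the $L^2$--signature of the $\NN(G)$--valued intersection form and $\sigma$ the ordinary signature. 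Atiyah's $L^2$--index theorem shows this defect depends only on the boundary, so the value is independent of all choices and agrees with the analytic invariant. Property (1) is then immediate: for a monomorphism $\b\colon G\to H$ the pair $(W,\b\circ\Psi)$ bounds $(N,\b\circ\a)$, the ordinary signature is unchanged, and since injectivity of $\b$ induces a trace--preserving inclusion $\NN(G)\hookrightarrow\NN(H)$, the $L^2$--signature is unchanged as well. Property (3) is equally direct: a diffeomorphism $f$ transports a bounding pair for $(M,\a)$ to one for $(N,\a\circ f_*)$, forcing the orientation of $W$ to be reversed exactly when $f$ reverses orientation; as both $\sigma(W)$ and $\sigma^{(2)}_G(W,\Psi)$ change sign under orientation reversal, this gives $\rho(N,\a\circ f_*)=\eps\cdot\rho(M,\a)$.

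For the Restriction property (2) I would fix a bounding pair $(W,\Psi)$ for $(N,\a)$, write $\varphi=q\circ\a$ for some $q\colon G\to A$, note that $q\circ\Psi\colon\pi_1(W)\to A$ extends $\varphi$, and form the associated $|A|$--fold cover $p\colon W_A\to W$. Its boundary is the cover $N_A$ of $N$, and $\tilde\Psi=\Psi\circ p_*$ restricts on $\partial W_A$ to $\pi_1(N_A)\to\pi_1(N)\xrightarrow{\a}G$, so $(W_A,\tilde\Psi)$ bounds $(N_A,\tilde\a)$ and $\rho(N_A,\tilde\a)=\sigma^{(2)}_G(W_A,\tilde\Psi)-\sigma(W_A)$. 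The two ingredients are: (i) the ordinary signature transfer $\sigma(W_A)=|A|\cdot\sigma^{(2)}_A(W,q\Psi)$, which together with $\rho(N,\varphi)=\sigma^{(2)}_A(W,q\Psi)-\sigma(W)$ yields $\sigma(W_A)=|A|\big(\sigma(W)+\rho(N,\varphi)\big)$; and (ii) multiplicativity of the $L^2$--signature under the finite cover, $\sigma^{(2)}_G(W_A,\tilde\Psi)=|A|\cdot\sigma^{(2)}_G(W,\Psi)$. For finite coefficient groups (ii) reduces via (1) to the elementary normalized--trace identity $\sigma^{(2)}_{G_0}(W_A,\tilde\Psi)=\tfrac{1}{|G_0|}\sigma(\hat W)$, where $\hat W$ is the common $G$--cover and $G_0=\ker q$ is the image of $\tilde\Psi$; note $\ker\Psi\subseteq\ker(q\Psi)$ makes the two covers coincide, and $|G_0|=|G|/|A|$ gives the factor $|A|$. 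Substituting, $\rho(N_A,\tilde\a)=|A|\sigma^{(2)}_G(W,\Psi)-|A|\big(\sigma(W)+\rho(N,\varphi)\big)=|A|\big(\rho(N,\a)-\rho(N,\varphi)\big)$, as claimed.

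For (4) I would use that $N(K)$ bounds a standard $4$--manifold $W$ built from a Seifert matrix $A$, with vanishing ordinary signature and with $\NN(\Z)$--intersection form represented by $A(1-t)+A^t(1-t^{-1})$. Under the identification $\NN(\Z)\cong L^\infty(S^1)$ the $L^2$--signature of such a form is $\int_{z\in S^1}\sign\big(A(1-z)+A^t(1-\bar z)\big)=\rho(K)$, whence $\rho(N(K),\phi_K)=\rho(K)$. The second formula is the $\Z/n$--analogue of the same computation: over $\NN(\Z/n)=\C[\Z/n]$ with its normalized trace the form decomposes along the $n$ characters $e^{2\pi ik/n}$, so the averaged signature equals $\tfrac1n\sum_{k=1}^{n}\s(K,e^{2\pi ik/n})$; this is the classical integrated Levine--Tristram signature formula and I would cite \cite{Le69,CT07}. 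Finally (5) follows structurally: the mirror image satisfies $N(K^r)\cong-N(K)$ via an orientation--reversing diffeomorphism carrying $\phi_{K^r}$ to $\phi_K$, so (3) with $\eps=-1$ and (4) give $\rho(K^r)=\rho(N(K^r),\phi_{K^r})=-\rho(N(K),\phi_K)=-\rho(K)$; alternatively one checks directly that $\s(K^r,z)=-\s(K,z)$ for all $z\in S^1$.

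The main obstacle I anticipate is ingredient (ii) of the Restriction step: the multiplicativity $\sigma^{(2)}_G(W_A,\tilde\Psi)=|A|\cdot\sigma^{(2)}_G(W,\Psi)$ for a general, possibly infinite, coefficient group $G$, together with ensuring that the bounding cover $W_A$ exists. For finite $G$ this is the elementary trace computation sketched above, but in general it requires the transfer behaviour of $\NN(G)$--dimensions and $L^2$--signatures under restriction to the finite--index subgroup $\pi_1(W_A)\le\pi_1(W)$, and care that the coefficient homomorphism on $W_A$ really is the restriction of $\Psi$. I expect this to be precisely the point where one must invoke the general multiplicativity of $L^2$--invariants under finite covers from \cite{Lu02} rather than a direct matrix argument.
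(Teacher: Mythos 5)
Your proposal is correct in substance, but it takes a genuinely different route from the paper's proof. The paper proves (1)--(3) analytically: it works with the Cheeger--Gromov heat-kernel definition $\rho(N,\a)=\eta(N,g)-\eta(N,g,\a)$ following \cite[Section~2]{CT07}, and reads each property off the corresponding property of the $G$-trace on the $\eta$-integrand --- the sign flip of the signature operator under orientation reversal for (3), invariance of the trace under induction along a monomorphism (\cite[Section~1.1.5]{Lu02}) for (1), and the scaling $\eta(N_A,g,\a')=|A|\cdot\eta(N,g,\a)$, $\eta(N_A,g)=|A|\cdot\eta(N,g,\varphi)$ under finite covers (\cite[Theorem~1.9~(8)]{Lu02}) for (2); for (4) it simply cites \cite[Proposition~5.1]{COT04} and \cite[Corollary~4.3]{Fr05}, and it handles (5) by the same two alternatives you give. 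You instead work throughout with the $L^2$-signature-defect description, which the paper only mentions in passing via \cite{CW03}; this buys you bordism-level transparency in (1)--(3) and an actual proof sketch of (4) instead of a citation, at the cost of taking the Cheeger--Gromov bounding construction and Atiyah's $L^2$-index theorem \cite{At76} as an up-front black box to know the defect is well defined and agrees with the analytic invariant. The crux you correctly isolate --- the multiplicativity $\sigma^{(2)}_G(W_A,\wti{\Psi})=|A|\cdot\sigma^{(2)}_G(W,\Psi)$ for possibly infinite $G$ --- is exactly the point where the paper invokes \cite[Theorem~1.9~(8)]{Lu02}, only applied to the $\eta$-integrand on the 3-manifold rather than to the intersection form of the bounding 4-manifold; your observation that $\ker\Psi\subseteq\ker(q\circ\Psi)$ forces the two $G$-covers to coincide reduces it to the statement that restricting an $\NN(G')$-form to a subgroup of index $|A|$ multiplies the $L^2$-signature by $|A|$, which is the same L\"uck restriction principle. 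Two small points to tidy: the Cheeger--Gromov construction only bounds a multiple $c\cdot(N,\a)$, so the factor $\frac{1}{c}$ must be carried through the restriction computation (each of the $c$ boundary copies lifts to a connected copy of $N_A$ precisely because $\varphi$ is onto on each copy); and the image of $\wti{\Psi}$ is $\im \Psi\cap\ker q$ rather than $\ker q$ unless $\Psi$ is surjective --- harmless, since your property (1) lets you first replace $G$ by $\im \Psi$.
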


The statements of the proposition are well--known to the experts, but since we are not aware of a proof in the literature we will now provide a proof.

\begin{proof}
For the fourth statement we refer to  \cite[Proposition~5.1]{COT04} (cf. also \cite[Corollary~4.3]{Fr05}). The last statement follows either directly from (3)  and the observation that there exists an orientation reversing homeomorphism $N(K^r)\to N(K)$, or it follows from (4) and the observation that if $A$ is a Seifert matrix for $K$, then $-A$ is a Seifert matrix for $K^r$.

We now give a quick outline of a proof for the first three statements.
Let $g$ be a metric on $N$ and let $\a\colon \pi_1(N)\to G$ be a homomorphism. We think of all covers of $N$ as equipped with the pull--back metric which we will also denote by $g$.  Following \cite[Definition~(2.3)]{CT07} we define
\[ \eta(N,g):=\frac{1}{\sqrt{\pi}}\int_0^\infty t^{-1/2}\trace\left(De^{-tD^2}\right)\dt,\]
where $D$ denotes the signature operator on even-dimensional forms on $N$ given by $*d-d*$.
Note that the Hodge $*$--operator depends on the metric.
Similarly, following \cite[Definition~(2.8)]{CT07} we define
\[ \eta(N,g,\a):=\frac{1}{\sqrt{\pi}}\int_0^\infty t^{-1/2}\trace_G\left(\wti{D}e^{-t\wti{D}^2}\right)\dt,\]
where $\wti{D}$ denotes the signature operator on the even--dimensional forms on the $\a$--cover of $N$,
and the $G$--trace is defined in \cite[Section~2]{CT07}.
Then the von Neumann $\rho$-invariant is defined as follows:
\[ \rho(N,\a):=\eta(N,g)-\eta(N,g,\a).\]
The invariant $\rho(N,\a)$ is independent of the choice of the metric.
Note that the signature operator changes its sign when we change the orientation of $N$,
it follows that  $\eta(-N,g)=-\eta(N,g)$ and $\eta(-N,g,\a)=-\eta(N,g,\a)$.
Statement (3) is now an immediate consequence.

Now let  $\b\colon G\to H$ be a monomorphism. It follows from the definitions and
\cite[Section~1.1.5]{Lu02} that
\[ \eta(N,g,\b)=\eta(N,g,\a).\]
This immediately implies the first statement.

Finally
  let $\varphi\colon \pi_1(N)\to A$ be an epimorphism to a finite group $A$ which factors through $\a$. Denote the corresponding cover by $N_A$ and denote by $\a'$ the map $\pi_1(N_A)\to \pi_1(N)\xrightarrow{\a}G $.
It follows from the definitions and \cite[Theorem~1.9~(8)]{Lu02} that
\[ \ba{rcl} \eta(N_A,g,\a')&=&|A|\cdot \eta(N,g,\a)\\
\eta(N_A,g)&=&|A|\cdot \eta(N,g,\varphi).\ea \]
It now follows that
\[ \ba{rcl}
\rho(N_A,\a')&=&\eta(N_A,g,\a')-\eta(N_A,g)\\[0.1cm]
&=&|A|\cdot\big(\eta(N,g,\a)-\eta(N,g,\v)\big)\\[0.1cm]
&=&|A|\cdot\big((\eta(N,g,\a)-\eta(N,g))- (\eta(N,g,\v)-\eta(N,g))\big)\\[0.1cm]
&=&|A|\cdot\big(\rho(N,\a )-\rho(N,\v)\big).
\ea \]
\end{proof}

\subsection{The Mahler measure and the Levine--Tristram signatures}\label{section:mahler}

In this section we will recall the well--known fact that one can calculate $\tau(K)$ and $\rho(K)$ from the zeros of the Alexander polynomial and finitely many calculations over the integers.

First recall that for a polynomial $p(t)$ the Mahler measure is defined as
\[ m(p(t))=\exp\big( \int_{z\in S^1} \ln|p(z)|\big).\]
If $p(t)=c_nt^n+c_{n-1}t^{n-1}+\dots+c_1t+c_0$ (with $c_n\ne 0$) and if  $r_1,\dots,r_n$ are the roots of $p(t)$, then it follows from Jensen's formula that
\be\label{equ:jensen} m(p(t)) = |c_n| \cdot \prod\limits_{j=1}^n \mbox{max}(|r_j|,1).\ee
Note that for a polynomial $p(t)$ we have $m(p(t))=1$ if and only if $p(t)$ is the product of cyclotomic
polynomials (cf. \cite[Lemma~19.1]{Sc95}). It follows that Theorems \ref{mainthm} and \ref{mainthm2v2} give in most cases  non--trivial information on commensurability.
 We refer to \cite{SW04} for more details and references.

Now  let $A$ be a $k\times k$--Seifert matrix for $K$. By the multiplicativity of the Mahler measure and since $m(t^{-1}-1)=1$ we have
\[ \ba{rcl} m(\Delta_K(t))&=&m(\det(At-A^t))\\
&=&m(t^{-1}-1)^k\cdot m(\det(At-A^t))\\
&=&m((t^{-1}-1)^k\det(At-A^t))\\
&=&m(\det((t^{-1}-1)(At-A^t)))\\
&=&m(\det(A(1-t)+A^t(1-t^{-1}))).\ea \]
We therefore see that
\[  \tau(K)=\int_{z\in S^1} \ln|\det(A(1-z)+A^t(1-z^{-1}))|.\]

We now turn to the study of the $\rho$--invariant and the Levine--Tristram signatures.

\begin{proposition} \label{propu1sign}
Let $K$ be a knot and $z\in S^1$. Then the following hold:
\bn
 \item The function $z\mapsto \s(K,z)$ is
 locally constant outside of the zero set of $\Delta_K(t)$.
 \item If $\Delta_K(t)$ has no zeros on $S^1$, then $\rho(K)=0$.
 \item If $\Delta_K(t)$ has exactly two zeroes $z=e^{\pm 2\pi i s}, s\in (0,\frac{1}{2})$ on $S^1$, and if $w=e^{\pm 2\pi it},t\in (0,\frac{1}{2})$, then
 \[ \s(K,w)=\left\{ \ba{ll} 0, &\mbox{ if $t\in (0,s)$,} \\ \s(K),&\mbox{ if $t\in (s,\frac{1}{2})$,}\ea \right.\]
  where $\s(K):=\s(K,-1)$ denotes the signature of the knot $K$.
  In particular
 \[ \rho(K)=\big(1-2s\big)\s(K).\]
\en
\end{proposition}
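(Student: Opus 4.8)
The plan is to study the Hermitian family $H(z):=A(1-z)+A^t(1-\bar z)$, $z\in S^1$, whose signature is by definition $\sigma(K,z)$. First I would record three structural facts. Each $H(z)$ is Hermitian and $H$ depends continuously (indeed real-analytically) on $z$, so the eigenvalues of $H(z)$ are real and vary continuously with $z$. From the determinant computation of Section~\ref{section:mahler} (applied with $t=z$, $t^{-1}=\bar z$) one has $\det H(z)=(\bar z-1)^k\Delta_K(z)$, so for $z\in S^1$ the matrix $H(z)$ is singular precisely when $z=1$ or $\Delta_K(z)=0$. Finally, since $\overline{H(z)}=H(z)^T$ and complex conjugation preserves the signature of a Hermitian matrix, we obtain the symmetry $\sigma(K,z)=\sigma(K,\bar z)$.

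For (1), the signature of a continuous Hermitian family can only change where an eigenvalue crosses $0$, i.e. where $\det H(z)=0$; hence $\sigma(K,z)$ is locally constant on $S^1\sm(\{1\}\cup\{\text{zeros of }\Delta_K\})$. The one extra point to dispose of is $z=1$, which lies outside the zero set of $\Delta_K$ because $\Delta_K(1)=\pm1$. Writing $z=e^{i\theta}$ and expanding gives $1-z=-i\theta+O(\theta^2)$ and $1-\bar z=i\theta+O(\theta^2)$, so $H(z)=i\theta(A^t-A)+O(\theta^2)=\theta\big(i(A^t-A)+O(\theta)\big)$. The matrix $i(A^t-A)$ is Hermitian and, since the skew form $A-A^t$ is unimodular, it is nonsingular; its eigenvalues occur in pairs $\pm\mu_j$, so its signature is $0$. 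By continuity of eigenvalues, $\sigma(K,z)=\sign\big(\theta\,(i(A^t-A)+O(\theta))\big)=0$ for $z$ near $1$ on the upper arc, and by the symmetry above also on the lower arc. Thus $\sigma(K,z)$ is locally constant (and equal to $0$) near $z=1$ as well, which proves (1).

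Statement (2) is then immediate: if $\Delta_K$ has no zeros on $S^1$, then $S^1\sm\{1\}$ is a connected arc on which $\sigma(K,\cdot)$ is locally constant, hence constant, and its value near $z=1$ is $0$; so $\sigma(K,z)\equiv0$ and $\rho(K)=\int_{z\in S^1}\sigma(K,z)=0$. For (3) I parametrize the upper arc by $w=e^{2\pi it}$, $t\in(0,\tfrac12)$; the only zero of $\Delta_K$ on it is at $t=s$, so $\sigma(K,w)$ is constant on $(0,s)$ and on $(s,\tfrac12)$. The value on $(0,s)$ is the limit at $t\to0^+$, which is $0$ by the computation above. Since $w=-1$ (i.e. $t=\tfrac12$) is not a zero of $\Delta_K$ as $s\in(0,\tfrac12)$, the function is locally constant there, so the value on $(s,\tfrac12)$ equals $\sigma(K,-1)=\sigma(K)$; by symmetry the lower arc mirrors this. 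Integrating with $\sigma(K,z)=\sigma(K,\bar z)$ then gives
\[ \rho(K)=\int_{z\in S^1}\sigma(K,z)=2\int_0^{1/2}\sigma(K,e^{2\pi it})\,dt=2\,\sigma(K)\Big(\tfrac12-s\Big)=(1-2s)\,\sigma(K). \]

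The main obstacle is the local analysis at $z=1$ in (1): this is the one point of $S^1\sm\{\text{zeros of }\Delta_K\}$ where $H(z)$ is nevertheless singular, and pinning the signature down there (rather than merely noting that it could jump) relies essentially on the unimodularity of $A-A^t$, which forces the leading term $i(A^t-A)$ to have signature $0$. Everything else reduces to continuity of eigenvalues, the conjugation symmetry, and an elementary integration.
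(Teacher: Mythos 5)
Your proof is correct, and it follows the same skeleton as the paper's: the signature of the Hermitian family $H(z)=A(1-z)+A^t(1-\bar z)$ is locally constant wherever $H(z)$ is nonsingular, the factorization $\det H(z)=(\bar z-1)^k\det(Az-A^t)$ identifies the singular points on $S^1$ with $\{1\}\cup\{z:\Delta_K(z)=0\}$, and parts (2) and (3) then follow by connectedness and an elementary integration using $\sigma(K,z)=\sigma(K,\bar z)$. The one place where you genuinely diverge is the exceptional point $z=1$ (where $H(1)=0$ is singular even though $\Delta_K(1)=\pm 1$): the paper disposes of it by citing Levine's theorem that the signature function is continuous at $z=1$, combined with $\sigma(K,1)=0$ and integrality, whereas you prove this from scratch via the expansion $H(e^{i\theta})=i\theta(A^t-A)+O(\theta^2)$, using that $A-A^t$ is unimodular so that the Hermitian matrix $i(A^t-A)$ is nonsingular of signature $0$, whence $\sigma(K,e^{i\theta})=0$ for all small $\theta\neq 0$. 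Your route buys a self-contained and elementary argument (and yields concretely that $\sigma(K,\cdot)$ vanishes identically near $z=1$, rather than merely being continuous there); the paper's citation buys brevity. Both arguments are sound, and the remaining steps coincide.
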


\begin{proof}
 A standard
argument in linear algebra shows that the function $z \mapsto \s(K,z)$ is continuous on
\[ \{ z\in S^1\, \,| \,\,A(1-z)+A^t(1-\bar{z}) \mbox{ is non--singular} \}.\]
Using $\frac{1-z}{1-\bar{z}}=-z$ we see that this set equals
\[ \{1\} \cup \{ z\in S^1 \,\,| \,\,\Delta_K(z)=\det(Az-A^t)\ne 0 \}.\]
Levine \cite[p.~242]{Le69} showed that the signature function is continuous at $z=1$. The other statements now follow immediately from the definitions and the observation that $\s(K,1)=0$.
\end{proof}

 Note that it follows from the above considerations that $\tau(K)$ and $\rho(K)$ are in some sense complementary.
  The former invariant only depends on the zeros of $\Delta_K(t)$ outside of the unit disk, whereas the latter is related to the zeros of $\Delta_K(t)$ on the unit circle.

\section{Proof of the main results}\label{section:proofs}

\subsection{Finite cyclic covers} \label{section:cycliccover}

We recall the following well--known lemma:

\begin{lemma}  \label{lem:b1}
Let $K\subset S^3$ be an admissible knot, then  $b_1(X(K)_n)=1$ for any $n\in \N$.
\end{lemma}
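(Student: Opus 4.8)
The plan is to reduce the computation of $b_1(X(K)_n)$ to a statement about the Alexander module of $K$, i.e.\ the first rational homology of the infinite cyclic cover. Let $\wti{X}$ denote the infinite cyclic cover of $X(K)$ and let $t$ generate its deck transformation group, so that $M:=H_1(\wti{X};\Q)$ is a module over $\Lambda:=\Q[t,t^{-1}]$. Since $\wti X$ is equally the infinite cyclic cover of $X(K)_n$, now with deck transformation $t^n$, I would first invoke the classical Milnor (Wang) exact sequence coming from the short exact sequence of chain complexes $0\to C_*(\wti X;\Q)\xrightarrow{t^n-1}C_*(\wti X;\Q)\to C_*(X(K)_n;\Q)\to 0$. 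In the relevant degrees this yields
\[
0 \to \coker\!\big(t^n-1\colon M\to M\big)\to H_1(X(K)_n;\Q)\to \ker\!\big(t^n-1\colon H_0(\wti X;\Q)\to H_0(\wti X;\Q)\big)\to 0.
\]
Because $\wti X$ is connected, $H_0(\wti X;\Q)=\Q$ carries the trivial $t$--action, so the right-hand term is $\Q$ and the sequence gives $b_1(X(K)_n)=1+\dim_\Q\coker(t^n-1\colon M\to M)$. Thus everything comes down to showing that this cokernel vanishes.

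Next I would control the cokernel using admissibility. Here I would use the standard fact that for a knot in $S^3$ the Alexander module $M$ is a torsion $\Lambda$--module; since $\Lambda$ is a PID, $M$ is a finite-dimensional $\Q$--vector space on which $t$ acts as an automorphism whose characteristic polynomial is, up to a unit of $\Lambda$, the Alexander polynomial $\Delta_K(t)$. Consequently the eigenvalues of $t$ on $M\otimes_\Q\C$ are precisely the roots of $\Delta_K(t)$. The key observation is then that $t^n-1\colon M\to M$ is injective — hence, $M$ being finite-dimensional, an isomorphism — if and only if $t$ has no eigenvalue $\zeta$ with $\zeta^n=1$, that is, if and only if $\Delta_K(t)$ has no $n$-th root of unity among its roots.

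Since $K$ is admissible, no root of unity is a zero of $\Delta_K(t)$, so $t^n-1$ is an isomorphism on $M$ for every $n$ and its cokernel is trivial; feeding this into the displayed formula gives $b_1(X(K)_n)=1$ for all $n$, as claimed. I expect the only real subtlety to be the bookkeeping in the exact sequence: one must remember that the $H_0$--term contributes exactly the summand $1$ (reflecting the $S^1$--direction of $X(K)_n$), and that it is the vanishing of the $H_1$--cokernel alone that forces $b_1=1$. The finite-dimensionality of $M$ over $\Q$ and the identification of the characteristic polynomial of $t$ with $\Delta_K(t)$ are entirely standard, so once the sequence is set up the remaining argument is short.
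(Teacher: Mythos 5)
Your proof is correct, but it follows a genuinely different route from the paper's. The paper proves the lemma by citing Gordon's formula for the homology of finite cyclic covers: $H_1(X(K)_n;\Z)\cong \Z\oplus G$ where $|G|=\big|\prod_{j=1}^n\Delta_K(e^{2\pi i j/n})\big|$ (the value $0$ meaning $G$ is infinite), so $b_1(X(K)_n)=1$ precisely when this product is nonzero, which admissibility guarantees. Your argument instead derives everything from first principles: the Milnor--Wang sequence gives $b_1(X(K)_n)=1+\dim_\Q\coker(t^n-1\colon M\to M)$ on the rational Alexander module $M$, and linear algebra identifies the vanishing of that cokernel with the absence of $n$-th roots of unity among the eigenvalues of $t$, i.e. among the roots of $\Delta_K(t)$. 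The two computations are really two faces of the same fact, since $\prod_{j=1}^n\Delta_K(e^{2\pi i j/n})$ is, up to sign and units, the determinant of $t^n-1$ acting on the Alexander module; but your version is self-contained, works entirely over $\Q$, and makes transparent that admissibility is not merely sufficient but also necessary for $b_1(X(K)_n)=1$ for all $n$ (the ``if and only if'' asserted just before the lemma in the introduction), whereas the paper's citation buys the stronger integral statement pinning down the order of the torsion subgroup of $H_1(X(K)_n;\Z)$. The steps you flag as standard (torsionness and finite $\Q$-dimensionality of $M$, and the identification of the characteristic polynomial of $t$ with $\Delta_K(t)$ up to units) are indeed standard, so the proof is complete as written.
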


\begin{proof}
It is well--known (see \cite[p.~17]{Go77}  for details) that
\[ H_1(X(K)_n)\cong \Z \oplus G\]
where $G$ is an abelian group which satisfies
 \be \label{equh1ln} |G| = \left| \prod_{j=1}^n \Delta_K(e^{2 \pi i j/n})\right|, \ee
where $0$ on the right hand side means that $G$ is infinite. It follows that $G$ is finite unless an $n$--th root of unity is a zero of $\Delta_K(t)$. The lemma is now an immediate consequence.
\end{proof}

Note that it
can be easily checked whether $\Delta_K(t)$ has a zero which is a root of unity. Indeed, if $\xi$
is a primitive $n$--th root of unity, then the minimal polynomial $p(t)$ of $\xi$ has degree $\varphi(n)$
where $\varphi$ is the Euler function. Since $p(t)$ divides $\Delta_K(t)$ we get an upper
bound on $\varphi(n)$ (and hence on $n$) in terms of $\deg(\Delta_K(t))$. In particular one can
show that $t^2-t+1$ (which incidentally is the Alexander polynomial of the trefoil knot) is the only Alexander polynomial of degree 2 which has zeroes which are roots
of unity.

\subsection{Invariants from the infinite cyclic cover}

The following lemma gives some fairly obvious invariants of cyclic commensurability.

\begin{lemma} \label{mainlem}
Let $K_1$ and $K_2$ be two cyclically commensurable knots such that at least one of them is admissible.
Then the following hold:
\bn
\item $\deg(\Delta_{K_1}(t))=\deg(\Delta_{K_2}(t)),$
\item $\Delta_{K_1}(t)$ is monic if and only if $\Delta_{K_2}(t)$ is monic,
\item $\genus(K_1)=\genus(K_2),$
\item  $K_1$ is fibered if and only if $K_2$ is fibered.
\en
\end{lemma}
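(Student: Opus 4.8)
The plan is to show that all four invariants are governed by the infinite cyclic cover of the knot exterior, and that the hypotheses force this cover to agree for $K_1$ and $K_2$. Assume without loss of generality that $K_1$ is admissible, and let $\varphi\colon X(K_1)_{n_1}\to X(K_2)_{n_2}$ be the given diffeomorphism. First I would invoke Lemma \ref{lem:b1} to get $b_1(X(K_1)_{n_1})=1$, whence $b_1(X(K_2)_{n_2})=1$ as well since $\varphi$ is a diffeomorphism. Write $\bar\phi_i\colon \pi_1(X(K_i)_{n_i})\to \Z$ for the epimorphism induced by $\phi_{K_i}$. Because $b_1=1$ on both sides, $\bar\phi_i$ factors through $H_1(X(K_i)_{n_i};\Z)/\mbox{torsion}\cong \Z$ and is, up to sign, the unique primitive class; hence $\varphi^*\bar\phi_2=\pm\bar\phi_1$.

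The key step is then that $\varphi$ lifts to a diffeomorphism $\ti\varphi\colon Y_1\to Y_2$ of the covers $Y_i$ associated to $\ker\bar\phi_i$, since $\varphi_*(\ker\bar\phi_1)=\ker\bar\phi_2$. As $\bar\phi_i$ and $\phi_{K_i}$ have the same kernel in $\pi_1(X(K_i))$, this $Y_i$ coincides with the infinite cyclic cover $\ti X(K_i)$ of $X(K_i)$; in particular $\pi_1(Y_i)=\ker\phi_{K_i}=[\pi_1(X(K_i)),\pi_1(X(K_i))]$, and $H_1(Y_i;\Z)$ (resp. $H_1(Y_i;\Q)$) is the integral (resp. rational) Alexander module of $K_i$. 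This single geometric input yields parts (1), (2) and (4). For (4), Stallings' fibration theorem identifies fiberedness of $K_i$ with finite generation of the commutator subgroup $\pi_1(Y_i)$, which $\ti\varphi_*$ transports. For (1), I would use $\dim_\Q H_1(Y_i;\Q)=\deg\Delta_{K_i}$ together with the linear isomorphism induced by $\ti\varphi$. For (2), I would use the standard fact that $\Delta_{K_i}$ is monic if and only if $H_1(Y_i;\Z)$ is finitely generated as an abelian group, again transported by $\ti\varphi_*$.

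Part (3) needs a different mechanism, since the genus is not visible on the infinite cyclic cover alone; here I would work on the finite covers via the Thurston norm. The norm of $\phi_{K}$ on $X(K)$ equals $\max(0,2\,\genus(K)-1)$, realized by a minimal genus Seifert surface. Using $p^*\phi_{K_i}=n_i\bar\phi_i$ for the covering $p\colon X(K_i)_{n_i}\to X(K_i)$, homogeneity of the norm, and Gabai's theorem that the Thurston norm is multiplicative under finite covers, I obtain $\|\bar\phi_i\|=\|\phi_{K_i}\|$. Since $\varphi^*\bar\phi_2=\pm\bar\phi_1$ and the Thurston norm is a diffeomorphism invariant insensitive to sign, this gives $\|\phi_{K_1}\|=\|\phi_{K_2}\|$ and hence $\genus(K_1)=\genus(K_2)$.

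The main external input is the genus statement: everything else is bookkeeping around the observation that $\varphi$ respects the infinite cyclic cover structure, which is exactly where admissibility is essential (via $b_1=1$ and the resulting uniqueness of the fibering class up to sign), whereas (3) genuinely relies on multiplicativity of the Thurston norm under finite covers. The only point to treat with care is the degenerate case $\genus=0$, where the norm vanishes; this is consistent, as the norm is zero precisely when the genus is zero.
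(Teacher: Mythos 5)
Your proof is correct and takes essentially the same approach as the paper: the uniqueness (up to sign) of the epimorphism onto $\Z$ when $b_1=1$ shows the finite cyclic covers share the infinite cyclic cover of each $X(K_i)$, from which (1), (2) and (4) follow by the standard characterizations (rational rank of the Alexander module, monicity, and Stallings' fibration criterion), while (3) uses Gabai's multiplicativity of the Thurston norm under finite covers exactly as in the paper. The only cosmetic differences are your explicit mention of the lifted diffeomorphism $\ti\varphi$ and of the degenerate genus-zero case, neither of which changes the argument.
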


The lemma is well--known to the experts but for completeness' sake we will give a proof.

\begin{proof}
Suppose that $K_1$ and $K_2$ are cyclically commensurable.
Then there exist $n_1,n_2\in \N$ such that $X(K_1)_{n_1}$ and $ X(K_2)_{n_2}$ are diffeomorphic.
We write $Y_i=X(K_i)_{n_i}$.
Since $K_1$ or $K_2$ is admissible it follows from Lemma \ref{lem:b1} that $b_1(Y_1)=b_1(Y_2)=1$.
 We denote the projection maps $Y_i\to X(K_i)$
by $p_i$ and for $i=1,2$ we pick epimorphisms ${\psi}_i\colon H_1(Y_i)\to\Z$ and $\phi_i\colon H_1(X(K_i))\to\Z$.

Let $i\in \{1,2\}$.
The map $\pi_1(Y_i)\to \pi_1(X(K_i))\xrightarrow{\phi_i}\Z$ defines an epimorphism $\pi_1(Y_i)\to n_i\Z$.
As mentioned above, there exists, up to sign, a unique epimorphism from $\pi_1(Y_i)$ onto a free cyclic group. It thus follows that
\[ \ker(\pi_1(Y_i)\xrightarrow{\psi_i} \Z)=\ker(\pi_1(Y_i)\to \pi_1(X(K_i))\xrightarrow{\phi_i}\Z)=\ker(\pi_1(X(K_i))\xrightarrow{\phi_i} \Z).\]
It thus follows that the unique infinite cyclic cover of $Y_i$ equals the infinite cyclic cover of $X(K_i)$.

Since $Y_1$ and $Y_2$ are diffeomorphic it follows that   $K_1$ and $K_2$ have diffeomorphic infinite cyclic covers. The first statement   now follows from
the fact that for any knot the degree of the Alexander polynomial equals the dimension of the rational homology of the infinite cyclic cover.
The second statement follows from the well--known fact that $\Delta_{K}(t)$ is monic if and only if the homology of the infinite cyclic cover is a free abelian group.
The last statement follows from the  fact (see \cite{St62}) that a knot is fibered if and only if the fundamental group of the infinite cyclic cover is finitely generated.

Finally we turn to the proof of the third statement.
 We now denote by $\psi_i,\phi_i, i=1,2$ the
elements in first cohomology groups of $Y_i, X(K_i)$ corresponding to the homomorphisms to $\Z$.
It  follows from the previous discussion that $(p_i)^*(\phi_i)=n_i{\psi_i}$.
We denote the Thurston norm of a 3--manifold $M$ by $x_M$ (cf. \cite{Th86} for details).
 Then we have
\[
x_{Y_i}(\psi_i)=\frac{1}{n_i}\cdot x_{{Y_i}}\big((p_i)^*(\phi_i)\big)=\frac{1}{n_i}\cdot n_i\cdot x_{X(K_i)}(\phi_i)=x_{X(K_i)}(\phi_i)=
\max\{0,2\,\genus(K_i)-1\},\] where the second equality is due to Gabai \cite{Ga83} and the other equalities follow from elementary properties of the Thurston norm (see \cite{Th86}).
\end{proof}

\begin{remark}
Let $K$ be a hyperbolic knot.
If $K$ is commensurable to a different knot $K'$, then it follows from
\cite[Theorem~1.7]{BBCW10} that $K$ and $K'$ are fibered. This can be viewed as a significant strengthening of the observations
of Lemma \ref{mainlem} under the somewhat stronger hypothesis that  $K$ is hyperbolic.
\end{remark}

\subsection{Proof of Theorem \ref{mainthm}}   \label{section:proofknot}

We will now prove the following theorem which can be viewed as a more refined version of Theorem \ref{mainthm}.

\begin{theorem} \label{mainthm2v2}
Let $K_1$ and $K_2$ be two knots and $n_1,n_2\in \N$ with $b_1(X(K_i)_{n_i})=1$. If  there exists
a diffeomorphism
\[ \varphi\colon  X(K_1)_{n_1}\xrightarrow{\cong} X(K_2)_{n_2},\] then
\[ n_1\cdot \tau(K_1)=n_2\cdot \tau(K_2)\in \R, \]
and
\[ n_1\cdot  \rho(K_1)-\sum_{k=1}^{n_1} \s(K_1,e^{2\pi ik/n_1})
=\eps \cdot \big( n_2\cdot \rho(K_2)-\sum_{k=1}^{n_2} \s(K_2,e^{2\pi ik/n_2})\big)\in \R,  \] where $\eps=1$
if $\varphi$ is orientation preserving, and $\eps=-1$ if $\varphi$ is orientation reversing.
\end{theorem}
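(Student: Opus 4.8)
The plan is to prove both identities by the same mechanism: realize each target quantity as an $L^2$--invariant of the cover $X(K_i)_{n_i}$ (respectively of a closed manifold built from it), compute it from the base via the Restriction property, and then transport it across $\varphi$ using the Induction and diffeomorphism--invariance properties of Propositions \ref{prop:l2torsion} and \ref{prop:vonneumann}. Throughout, write $\psi_i\colon \pi_1(X(K_i)_{n_i})\to \Z$ for the epimorphism, unique up to sign because $b_1(X(K_i)_{n_i})=1$ forces $H^1(X(K_i)_{n_i};\Z)\cong\Z$; let $\a_i'\colon \pi_1(X(K_i)_{n_i})\to \pi_1(X(K_i))\xrightarrow{\phi_{K_i}}\Z$ be the restriction of $\phi_{K_i}$, and let $q_i\colon \pi_1(X(K_i))\to\Z/n_i$ be the reduction of $\phi_{K_i}$, whose associated cover is $X(K_i)_{n_i}$.

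For the torsion identity I would apply the Restriction property of Proposition \ref{prop:l2torsion} with $\a=\phi_{K_i}$ and $q_i$ as above, giving $\tau(X(K_i)_{n_i},\a_i')=n_i\cdot \tau(X(K_i),\phi_{K_i})=n_i\cdot\tau(K_i)$. Since $\a_i'$ has image $n_i\Z$ while $H^1(X(K_i)_{n_i};\Z)\cong\Z$ is generated by $\psi_i$, one has $\a_i'=\pm n_i\psi_i$, so the Induction property (composition with the monomorphism $\Z\to\Z$ given by multiplication by $\pm n_i$) yields $\tau(X(K_i)_{n_i},\a_i')=\tau(X(K_i)_{n_i},\psi_i)$. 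Finally $\psi_2\circ\varphi_*$ is an epimorphism onto $\Z$, hence equals $\pm\psi_1$, so Induction and then part (3) of Proposition \ref{prop:l2torsion} give $\tau(X(K_1)_{n_1},\psi_1)=\tau(X(K_1)_{n_1},\psi_2\circ\varphi_*)=\tau(X(K_2)_{n_2},\psi_2)$. Stringing these together gives $n_1\tau(K_1)=n_2\tau(K_2)$.

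The $\rho$--identity follows the same template but must be carried out on closed manifolds, since $\rho(N,\a)$ is only defined for those. Applying the Restriction property of Proposition \ref{prop:vonneumann} to $N(K_i)$ together with part (4) shows that the two sides of the claimed identity are exactly $\rho(N(K_i)_{n_i},\bar\a_i')$, where $\bar\a_i'$ is the restriction of $\phi_{K_i}$ to $N(K_i)_{n_i}$; indeed the Restriction formula reads $\rho(N(K_i)_{n_i},\bar\a_i')=n_i\big(\rho(K_i)-\tfrac{1}{n_i}\sum_{k=1}^{n_i}\s(K_i,e^{2\pi ik/n_i})\big)$. The obstacle is that $\varphi$ is only a diffeomorphism of the exteriors $X(K_i)_{n_i}$, not of the surgered manifolds $N(K_i)_{n_i}$. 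To remedy this I would note that $N(K_i)_{n_i}$ is obtained from $X(K_i)_{n_i}$ by filling its boundary torus along the lifted longitude $\lambda_i$, and that $\lambda_i$ is characterized intrinsically: it is null--homologous over $\Q$ (as $\psi_i(\lambda_i)=0$), whereas $\mu^{n_i}$ is not (since $\phi_{K_i}(\mu^{n_i})=n_i\neq 0$), so $\lambda_i$ spans the one--dimensional kernel of $H_1(\partial X(K_i)_{n_i};\Q)\to H_1(X(K_i)_{n_i};\Q)\cong\Q$. Because $\varphi_*$ commutes with the inclusion--induced maps, it carries this kernel to the corresponding one, so $\varphi_*(\lambda_1)=\pm\lambda_2$; as $\lambda_i$ is the meridian of the filling solid torus, $\varphi$ extends over the fillings to a diffeomorphism $\bar\varphi\colon N(K_1)_{n_1}\to N(K_2)_{n_2}$ which is orientation--preserving exactly when $\varphi$ is.

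With $\bar\varphi$ in hand the argument closes as before. A Mayer--Vietoris computation gives $b_1(N(K_i)_{n_i})=1$, so $H^1(N(K_i)_{n_i};\Z)\cong\Z$ is generated by a unique--up--to--sign epimorphism $\bar\psi_i$; exactly as above $\bar\a_i'=\pm n_i\bar\psi_i$ and $\bar\psi_2\circ\bar\varphi_*=\pm\bar\psi_1$. Applying Induction to strip the factor $\pm n_i$ and part (3) of Proposition \ref{prop:vonneumann} to transport across $\bar\varphi$ gives $\rho(N(K_1)_{n_1},\bar\a_1')=\eps\cdot\rho(N(K_2)_{n_2},\bar\a_2')$ with $\eps=\pm1$ according to the orientation behaviour of $\bar\varphi$, hence of $\varphi$, which is the desired identity. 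The main difficulty is the extension step of the previous paragraph; the remaining steps are bookkeeping with the three structural properties.
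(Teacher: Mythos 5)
Your proposal is correct and follows essentially the same route as the paper: both identities are obtained by computing the invariant of the cover via the Restriction property, rescaling the character via Induction (your $\a_i'=\pm n_i\psi_i$ is just a variant of the paper's kernel-equality argument), and transporting across the diffeomorphism; for the $\rho$--invariant you pass to the $0$--surgeries by characterizing the filling slope as the generator of $\ker\{H_1(\partial X(K_i)_{n_i})\to H_1(X(K_i)_{n_i})\}$ and extending $\varphi$ over the fillings, exactly as in the paper's proof.
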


Theorem \ref{mainthm} follows immediately from Lemma \ref{lem:b1} and from the observation that  Levine--Tristram signatures are integers.

\begin{proof}
We write $Y_i=X(K_i)_{n_i}$. We denote the projection maps $Y_i\to X(K_i)$
by $p_i$. For $i=1,2$ we pick epimorphisms ${\psi}_i\colon  H_1(Y_i)\to\Z$ and $\phi_i\colon  H_1(X(K_i))\to\Z$.
Note that in the proof of Lemma \ref{mainlem} we showed that $\ker(\psi_i)=\ker(\phi_i\circ p_i)$.
It thus follows from Proposition \ref{prop:l2torsion} that
\[  \tau(Y_i,\psi_i)= \tau(Y_i,\phi_i\circ p_i)=n_i\cdot \tau(X(K_i),\phi_i) =n_i\cdot \tau(K_i).\]
The first equality now follows from Proposition \ref{prop:l2torsion} (3).

We now turn to the von Neumann $\rho$--invariant.
Note that $Y_i$  has one boundary component and  that
$\ker\{H_1(\partial Y_i)\to H_1(Y_i)\}\cong \Z$. Therefore, up to sign, there exists a unique
generator $l$ of $\ker\{H_1(\partial Y_i)\to H_1(Y_i)\}$. By doing Dehn filling with the slope $l$ we
get a closed 3--manifold $M_i$. Note that $\varphi\colon  Y_1\to Y_2$ canonically extends to a diffeomorphism $\varphi\colon  M_1\to M_2$.
Also note that we can identify $M_i$ with $N(K_i)_{n_i}$.

Clearly we have $H_1({M_i})=H_1(Y_i)$, in particular $b_1(M_i)=b_1(Y_i)=1$.
 We denote the projection maps $M_i\to N(K_i)$
by $p_i$ and for $i=1,2$ we pick epimorphisms ${\psi}_i\colon  H_1(M_i)\to\Z$ and $\phi_i\colon  H_1(N(K_i))\to\Z$.
The argument of  the proof of Lemma \ref{mainlem} shows that $\ker(\psi_i)=\ker(\phi_i\circ p_i)$.
 By Proposition \ref{prop:vonneumann} we
have
 \[ \ba{rcl} \rho(M_i,\psi_i) &=&\rho(M_i,\phi_i\circ p_i)\\[0.1cm]
 &=&n_i\big(\rho(N(K_i),\phi_i)\,\,-\,\,\rho(N(K_i),\pi_1(N(K_i))\to \Z\to \Z/n_i)\big).\ea \]
The second equality now follows from Proposition \ref{prop:l2torsion} (3) and (4).

\end{proof}

\subsection{Classical approaches to Theorem \ref{mainthm}}\label{section:classical}

Theorem \ref{mainthm} can be viewed as a statement about classical invariants
(i.e. invariants which are determined by the Seifert matrix) of cyclically commensurable knots.
It is a natural question whether Theorem \ref{mainthm} can be proved without using $L^2$--invariants.

We will give alternative proofs for the statement in Theorem \ref{mainthm} regarding $\tau(K_1)$ and $\tau(K_2)$.
We will also propose another approach for proving the relationship between $\rho(K_1)$ and $\rho(K_2)$, but we will not pursue this approach.

In the following let $K_1$ and $K_2$ be two oriented cyclically commensurable knots such that there exists a diffeomorphism $\varphi\colon X(K_1)_{n_1}\to X(K_2)_{n_2}$ with $b_1(X(K_1)_{n_1})=1$. We write $\eps=1$ if $\varphi$ is orientation preserving, otherwise we write $\eps=-1$.
We now identify $X(K_1)_{n_1}$ with $X(K_2)_{n_2}$ and we call this space $Y$.
We will show that
\[ n_1\cdot \tau(K_1)=n_2\cdot \tau(K_2) \]
in two different `classical' ways, using the homology growth function and by studying the orders of the Alexander modules.

\subsubsection{The homology growth function}
Given $n\in \N$ we denote by $Y_n$ the $n$--fold cyclic cover corresponding to the map $\pi_1(Y)\to \Z\to \Z/n$, where the first map is one of the two canonical epimorphisms.
It then follows from  \cite[Theorem~2.1]{SW02} that
\[ \ba{rcl}
n_1\cdot m(\Delta_K(t_1))&=&n_1\cdot \lim_{k\to \infty}\frac{1}{k}\log |\op{Tor} H_1(X(K_1)_k)|\\[1mm]
&=& \lim_{k\to \infty}\frac{1}{k}\log |\op{Tor} H_1(X(K_1)_{n_1k})|\\[1mm]
&=& \lim_{n\to \infty}\frac{1}{k}\log |\op{Tor} H_1(Y_k)|\\[1mm]
&=& \lim_{k\to \infty}\frac{1}{k}\log |\op{Tor} H_1(X(K_2)_{n_2k})|\\[1mm]
&=&n_2\cdot \lim_{k\to \infty}\frac{1}{k}\log |\op{Tor} H_1(X(K_2)_k)|=n_2\cdot m(\Delta_K(t_2)).\ea \]

\subsubsection{The Alexander polynomial and powers of variables}\label{section:powers}

Let $H$ be a torsion $\zt$-module which admits a square presentation matrix,
i.e. $H\cong \zt^l/A\zt^l$ for some matrix $A$ over $\zt$.
 We denote by $\ord_{\zt}(H):=\det_{\zt}(A)$ the order of $H$ as a $\zt$--module.
(We refer to \cite{Hi02} and \cite{Tu01} for background on orders of $\zt$--modules.)
We  factor $\ord_{\zt}(H)$  as follows:
\[ \ord_{\zt}(H)=C\cdot \prod_{i=1}^m (t-r_i),\]
where $C\in \Z$ and $r_1,\dots,r_m\in \C^*$.

Now  let $n\in \Z$. We then  view $H$ as a $\Z[s^{\pm 1}]$-module where $s$ acts on $H$
by multiplication by $t^n$. Then it follows from \cite[Lemma~4.1]{SW06} that
\be \label{ord:tn} \ord_{\Z[s^{\pm 1}]}(H) =C^n\cdot \prod_{i=1}^m(t-r_i^n).\ee
It follows immediately from Jensen's formula (\ref{equ:jensen}) that
\[ m( \ord_{\Z[s^{\pm 1}]}(H))=n\cdot m( \ord_{\Z[t^{\pm 1}]}(H)).\]

We now return to our setup of cyclically commensurable knots.
Note that $X(K_1)$, $X(K_2)$ and  $Y$ have the same infinite cyclic cover, which we denote by $\wti{Y}$.
For $i=1,2$ we now identify $H_1(X(K_i))$ with the infinite cyclic group $\ll t_i\rr$.
Furthermore we denote by $s$ the generator of $H_1(Y)$ which corresponds to $t_1^{n_1}$. Note that $s=t_2^{\eps n_2}$.
Finally we write
\[ H:=H_1(\wti{Y};\Z),\]
which we view as a module over $\Z[t_1^{\pm 1}]$, $\Z[t_2^{\pm 1}]$ respectively $\Z[s^{\pm 1}]$ in the obvious ways.
It then follows from the above discussion that
\[ \ba{rcl} n_1\cdot m(\Delta_K(t_1))&=&n_1\cdot m(\ord_{\Z[t_1^{\pm 1}]}(H))\\
&=&m(\ord_{\Z[s^{\pm 1}]}(H))\\
&=&n_2\cdot m(\ord_{\Z[t_2^{\pm 1}]}(H))=n_2\cdot m(\Delta_K(t_2)).\ea \]
From the above argument we obtain in fact the following proposition, which gives considerably more information
on cyclically commensurable knots than one can obtain from the Mahler measure.

\begin{proposition}\label{prop:alexcycliccommensurable}
Let  $K_1$ and $K_2$ be two oriented cyclically commensurable knots such that there exists a diffeomorphism $\varphi\colon X(K_1)_{n_1}\to X(K_2)_{n_2}$
with $b_1(X(K_1)_{n_1})=1$. We write
\[ \Delta_{K_1}(t)=C\cdot \prod_{i=1}^m (t-r_i) \mbox{ and }\Delta_{K_2}(t)=D\cdot \prod_{i=1}^m (t-s_i)\]
where $C,D\in \Z$ and $r_1,\dots,r_m,s_1,\dots,s_m\in \C^*$. Then $C^{n_1}=D^{n_2}$ and
\[ \{r_1^{n_1},\dots,r_m^{n_1}\}=\{s_1^{n_2},\dots,s_m^{n_2}\}\]
as sets with multiplicities.
\end{proposition}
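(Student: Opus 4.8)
The plan is to upgrade the Mahler-measure computation of Section~\ref{section:powers} into a statement about the full order ideal, keeping track of the factorization over $\C^*$ rather than only its Mahler measure. First I would recall the setup already assembled there. Since $b_1(X(K_1)_{n_1})=1$, the argument in the proof of Lemma~\ref{mainlem} shows that $X(K_1)$, $X(K_2)$ and $Y$ share a single infinite cyclic cover $\wti Y$, so the module $H:=H_1(\wti Y;\Z)$ carries compatible structures over $\Z[t_1^{\pm1}]$, $\Z[t_2^{\pm1}]$ and $\Z[s^{\pm1}]$, where $s=t_1^{n_1}=t_2^{\eps n_2}$ and $\ord_{\Z[t_i^{\pm1}]}(H)\doteq\Delta_{K_i}(t)$. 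Because $\Z[t_i^{\pm1}]$ is a finite extension of $\Z[s^{\pm1}]$ and $H$ is $\Z[t_i^{\pm1}]$-torsion, $H$ is a finitely generated torsion $\Z[s^{\pm1}]$-module, so $\ord_{\Z[s^{\pm1}]}(H)$ is defined and nonzero.

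The key step is to compute this single invariant $\ord_{\Z[s^{\pm1}]}(H)$ in two ways via (\ref{ord:tn}). Applying (\ref{ord:tn}) to $\Delta_{K_1}(t)=C\prod_{i=1}^m(t-r_i)$ with $s=t_1^{n_1}$ gives $\ord_{\Z[s^{\pm1}]}(H)\doteq C^{n_1}\prod_{i=1}^m(s-r_i^{n_1})$, while applying it to $\Delta_{K_2}(t)=D\prod_{i=1}^m(t-s_i)$ with $s=t_2^{\eps n_2}$ gives $\ord_{\Z[s^{\pm1}]}(H)\doteq D^{n_2}\prod_{i=1}^m(s-s_i^{\eps n_2})$. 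Since the order is well defined only up to a unit $\pm s^k$ of $\Z[s^{\pm1}]$, these two expressions differ by such a unit. Both are honest polynomials in $s$ of the same degree $m$ (using $\deg\Delta_{K_1}=\deg\Delta_{K_2}$ from Lemma~\ref{mainlem}) with nonzero constant term, which forces $k=0$; comparing the monic parts and the leading coefficients then yields
\[ \{r_1^{n_1},\dots,r_m^{n_1}\}=\{s_1^{\eps n_2},\dots,s_m^{\eps n_2}\} \quad\text{and}\quad C^{n_1}=\pm D^{n_2}.\]

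It remains to remove the dependence on $\eps$ and to settle the sign, and this bookkeeping is the main (mild) obstacle. For the root multisets I would invoke the standard symmetry $\Delta_K(t)\doteq\Delta_K(t^{-1})$, which says that $\{s_i\}$ is invariant under inversion; hence $\{s_i^{-n_2}\}=\{s_i^{n_2}\}$ and the case $\eps=-1$ collapses to the desired conclusion $\{r_i^{n_1}\}=\{s_i^{n_2}\}$. The genuinely delicate point is the sign in $C^{n_1}=\pm D^{n_2}$, since the order ideal pins down the leading coefficient only up to sign; upgrading this to an equality requires fixing the normalization of the Alexander polynomials. I would resolve it by working with the symmetric normalization together with the evaluation $\Delta_K(1)=\pm1$, or equivalently by fixing the $\pm s^k$ ambiguity canonically at the level of the Milnor torsion of $\wti Y$, so that the two computed orders are literally equal and the sign comparison gives $C^{n_1}=D^{n_2}$.
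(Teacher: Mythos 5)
Your proof follows exactly the paper's route: the paper deduces the proposition from the very same double application of (\ref{ord:tn}) to $H=H_1(\wti{Y};\Z)$, regarded as a module over $\Z[t_1^{\pm 1}]$, $\Z[t_2^{\pm 1}]$ and $\Z[s^{\pm 1}]$ (the text simply states that the proposition follows ``from the above argument''). Your two bookkeeping steps --- forcing $k=0$ in the unit $\pm s^k$ by comparing degrees and nonzero constant terms, and collapsing the case $\eps=-1$ via the symmetry $\Delta_K(t)\doteq\Delta_K(t^{-1})$ --- are details the paper leaves implicit, and both are correct and genuinely needed for a complete argument.

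The one step that does not hold up is your proposed resolution of the sign. The condition $\Delta_K(1)=\pm 1$ is satisfied by both $\Delta_K$ and $-\Delta_K$, so it cannot select a sign; and fixing the ambiguity ``at the level of the Milnor torsion of $\wti{Y}$'' meets the same obstacle, since torsion is itself only defined up to units $\pm s^k$ (short of Turaev-type sign-refined torsion, which you would then have to set up from scratch). The ambiguity is in fact intrinsic to the statement rather than to your argument: if some $n_i$ is odd, replacing the chosen representative $\Delta_{K_i}$ by $-\Delta_{K_i}$ flips the sign of $C^{n_1}$ (respectively $D^{n_2}$), so the asserted equality $C^{n_1}=D^{n_2}$ is only meaningful once a normalization of the representatives is fixed --- a point the paper also passes over in silence. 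The simplest consistent convention is $C,D>0$; under it both $C^{n_1}$ and $D^{n_2}$ are positive, and the relation $C^{n_1}=\pm D^{n_2}$ that you did establish rigorously upgrades at once to $C^{n_1}=D^{n_2}$. Equivalently, your argument proves $|C|^{n_1}=|D|^{n_2}$ together with the equality of root multisets, which is the normalization-free content of the proposition. So the gap is real but is closed by a one-line normalization remark, not by the torsion-theoretic repair you sketch.
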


\subsubsection{The Blanchfield pairing of cyclically commensurable knots}

The statement and the proof of Proposition \ref{prop:alexcycliccommensurable} shows that more information regarding cyclically commensurable knots
can be obtained by studying the Alexander polynomial directly.

Recall that to a knot $K$ we can associate the
Blanchfield pairing (see  \cite{Bl57,Ke75a,Hi02})
\[ \l(K)\colon  H_1(X(K);\zt)\times H_1(X(K);\zt)\to \Q(t)/\zt,\]
which is a hermitian, non-singular pairing on the Alexander module.
Note that the  Blanchfield pairing completely determines the Levine-Tristram signatures,
we refer to \cite{Ke75a,Ke75b} and \cite{Le89} for details.
We expect that a careful study of the Blanchfield pairing can be used to reprove the second statement of Theorem \ref{mainthm}
and to extract considerably more information regarding the Levine-Tristram signatures of cyclically commensurable knots.

\section{Other $L^2$--invariants}\label{section:otherl2} \label{section:morel2}

The approach taken in the proof of Theorem \ref{mainthm} can be formalized as follows.
Let $\NN$ be a class of 3--manifolds (possibly equipped with a Riemannian structure or an orientation)
let $\GG$ be an abelian group and let $\II$ be a $\GG$--valued invariant of pairs $(N,\a)$ where $N\in \NN$ and $\a\colon  \pi_1(N)\to G$ is a group homomorphism. Suppose $\II$ has the following two properties:
\bn
\item[($\AA 1$)] Let $N\in \NN$ and let  $\a\colon  \pi_1(N)\to G$ be a homomorphism to a group and let  $\b\colon  G\to H$ be a monomorphism of groups, then
\[ \II(N,\a\colon  \pi_1(N)\to G)=\II(N,\b\circ \a\colon  \pi_1(N)\to H) \quad \mbox{ (Induction)}.\]
\item[($\AA 2$)]  Let $N\in \NN$, let $\a\colon  \pi_1(N)\to G$  be a homomorphism to a group
 and let $\varphi\colon  \pi_1(N)\to A$ be an epimorphism to a finite group $A$ which factors through $\a$. Denote the corresponding cover by $N_A$. If $N_A\in \NN$, then
\[ \II(N_A,\pi_1(N_A)\to \pi_1(N)\xrightarrow{\a}\G )=|A|\cdot \II(N,\a) \quad \mbox{ (Restriction)}.\]
\en
If $\NN$ contains finite cyclic covers of knot complements, then
the argument of the proof of Theorem \ref{mainthm2v2} shows that if $K_1$ and $K_2$ are two admissible knots such that there exists a diffeomorphism $X(K_1)_{n_1}\to X(K_2)_{n_2}$ (which respects any extra structure of $\NN$), then
\[ n_1\cdot \II(X(K_1),\phi_{K_1})=n_2\cdot \II(X(K_2),\phi_{K_2}).\]
(An analogous statement also holds for $\II(N(K),\phi_{K})$ if $\NN$ contains 0--framed surgeries.)
A straightforward argument also shows that if $N_1,N_2\in \NN$ admit a diffeomorphism
(which respects any extra structure of $\NN$) between an $n_1$--fold cover of $N_1$ and an $n_2$--fold cover of $N_2$, then
\[ n_1\cdot \II(N_1,\id\colon  \pi_1(N_1)\to \pi_1(N_1))= n_2\cdot \II(N_2,\id\colon  \pi_1(N_2)\to \pi_1(N_2)).\]
Put differently, such an invariant $\II$ gives rise to invariants of cyclic commensurability of knots and to invariants of commensurability of 3--manifolds.

The following table lists invariants which satisfy the above `axioms' ($\AA 1$) and ($\AA 2$):
\[
\ba{|l|l|l|l}
\hline
\NN & \GG &\II \\
\hline
\mbox{3--manifolds} & \R &\mbox{$L^2$--torsion} \\
\mbox{closed oriented 3--manifolds} & \R/\Q &\mbox{von Neumann $\rho$--invariant}\\
\mbox{3--manifolds}& \R& \mbox{$L^2$--Betti numbers}\\
\mbox{closed, oriented, hyperbolic 3--manifolds}&\R &\mbox{von Neumann $\eta$--invariant}\\
\hline \ea
\]
The fact that the first two invariants have the required properties is basically the content of
Propositions \ref{prop:l2torsion} and \ref{prop:vonneumann}. The only missing ingredient is the well--known fact that given any 3--manifold and any epimorphism $\a\colon  \pi_1(N)\to G$ to a finite group we have $\rho(N,\a)\in \Q$.
(This in turn is a well--known consequence of the fact that $\Omega_3(G)$ is finite and that $\rho(N,\a)$ can thus be computed in terms of signatures of bounding 4--manifolds, see e.g. \cite{CW03} for details.)
The $L^2$--torsion of 3--manifolds thus gives rise to a commensurability invariant, but this invariant is well--known: L\"uck and Schick  \cite{LS99} showed that the invariant $-6\pi\cdot \tau(N,\id\colon  \pi_1(N)\to \pi_1(N))$
equals the sum of the volumes of the hyperbolic pieces in the JSJ decomposition of $N$.
On the other hand $\rho(N,\id\colon  \pi_1(N)\to \pi_1(N))\in \R/\Q$ gives rise to a new commensurability invariant of oriented 3--manifolds, unfortunately there are no known methods for computing it, let alone for showing that it is non--trivial in $\R/\Q$.

For the fact that $L^2$--Betti numbers have the desired properties we refer to \cite[Theorem~6.54~(6)~and~(7)]{Lu02}, but unfortunately by
\cite[Theorems~4.1~and~4.2]{Lu02} they do not give rise to interesting commensurability invariants.

Finally the fact that the von Neumann $\eta$--invariant of closed, oriented hyperbolic 3--manifolds has the desired properties follows immediately from the proof of Theorem \ref{prop:vonneumann}.
Unfortunately we do not know any methods for computing this invariant. We do not even know how to calculate $\eta(N(K),\phi_K)$ (assuming that $N(K)$ is hyperbolic).

\section{Examples}\label{section:examples}

\subsection{Examples of commensurable knots which are not cyclically commensurable}\label{section:ar92}
In \cite{AR92} Aitchison and Rubinstein introduce the two dodecahedral knots $D_s$ and $D_f$. They are commensurable since the complements cover the same orbifold.
Note that $D_f$ is fibered whereas $D_s$ is not. Furthermore $\mbox{genus}(D_s)=4$
and $\mbox{genus}(D_f)=5$. Also note
that
\[ \ba{rcl}
\Delta_{D_s}(t)\hspace{-0.2cm}&=&\hspace{-0.2cm}25(t^{-4}\hspace{-0.12cm}+\hspace{-0.12cm}t^4)-250(t^{-3}\hspace{-0.12cm}+\hspace{-0.12cm}t^3)t+1035(t^{-2}\hspace{-0.12cm}+\hspace{-0.12cm}t^2)-2300(t^{-1}\hspace{-0.12cm}+\hspace{-0.12cm}t)+2981t^4\\
\Delta_{D_f}(t)\hspace{-0.2cm}&=&\hspace{-0.2cm}-(t^{-5}\hspace{-0.12cm}+\hspace{-0.12cm}t^5)+29(t^{-4}\hspace{-0.12cm}+\hspace{-0.12cm}t^4)-254(t^{-3}\hspace{-0.12cm}+\hspace{-0.12cm}t^3)+1035(t^{-2}\hspace{-0.12cm}+\hspace{-0.12cm}t^2)-2304(t^{-1}\hspace{-0.12cm}+\hspace{-0.12cm}t)+2991\ea
\]
Finally note that the two knots are admissible, hence every condition of Lemma \ref{mainlem} shows that $D_s$ and $D_f$ are not cyclically commensurable.
(This was of course already known to Aitchison and Rubinstein.)
At the moment this  is the only known example of commensurable but not cyclically commensurable knots.


\subsection{Examples of cyclically commensurable knots}\label{section:example2}

Consider the knots $K_1=9_{48}$ and $K_2=12^n_{642}$.
They have the property that $X(K_1)_8=X(K_2)_6$ (see \cite[Section~5.1]{RW08}).
First note that $\genus(K_1)=\genus(K_2)=2$, that both are fibered  and that
\[ \ba{rcl} \Delta_{K_1}(t)&=&t^4-7t^3+11t^2-7t+1,\\
\Delta_{K_2}(t)&=&t^4+7t^3-15t^2+7t+1.\ea \]
In particular they are admissible. Note that all the above invariants are in line with Lemma \ref{mainlem}.


Using the substitution $x=t+t^{-1}$ one can easily compute the zeros of $\Delta_{K_1}(t)$, they are
\[ t_{1,\pm}^*=\frac{1}{2}\left(x_1^*\pm \sqrt{(x_1^*)^2-4}\right), \mbox{ with } x_1^*=\frac{1}{2}\left(7\pm \sqrt{13}\right)
\mbox{ and } *\in \{+,-\}.\]
Two of the zeros lie on the unit circle and only one zero lies outside of the unit circle, we therefore get
\[ m(\Delta_{K_1}(t))=\frac{1}{4}\left(7+\sqrt{13}+\sqrt{46+14\sqrt{13}}\right)\approx 5.10696....\]
We can also compute
 the zeros of $\Delta_{K_2}(t)$, they are
\[ t_{2,\pm}^*=\frac{1}{2}\left(x_2^*\pm \sqrt{(x_2^*)^2-4}\right), \mbox{ with } x_2^*=\frac{1}{2}\left(-7\pm 3\sqrt{13}\right)
\mbox{ and } *\in \{+,-\}.\]
It follows that
\[ m(\Delta_{K_2}(t))=\frac{1}{4}\left| -7-3\sqrt{13}-\sqrt{150+42\sqrt{13}}\right|\approx 8.79462....\]
A somewhat painful calculation will show that
\[ |t_{1+}^+|^6=|t_{2-}^-|^8,\]
which by (\ref{equ:jensen}) then implies that
\[ 6 \,\tau(K_2)=6 \ln (m(\Delta_{K_2})(t))=8 \ln (m(\Delta_{K_1})(t))=8 \,\tau(K_1), \]
in agreement with Theorem \ref{mainthm2v2}.
Note that it also follows immediately from Theorem \ref{mainthm2v2} that if $n_1$ and $n_2\in \N$ are such that $X(K_1)_{n_1}\cong X(K_2)_{n_2}$, then
\be \label{equ:43} \frac{n_1}{n_2}=\frac{\tau(K_2)}{\tau(K_1)}=\frac{4}{3}.\ee

We now turn to the study of the von Neumann $\rho$--invariants of $K_1$ and $K_2$. From the above computation we see that there are exactly two zeros of $\Delta_{K_1}(t)$ on the unit circle, namely
\[ t_{1,\pm}=\frac{1}{2}\left(x_1\pm i \sqrt{|x_1^2-4|}\right), \mbox{ with }x_1=\frac{1}{2}\left(7-\sqrt{13}\right).\]
Using $\s(K_1)=2$ and Proposition  \ref{propu1sign}  we get
\[ \rho(K_1)=\int_{w\in S^1} \s(K_1,w)=\s(K_1)\left(1-\frac{\cos^{-1}(x_1/2)}{\pi}\right)\approx 1.645123....\]
Similarly we
see that there are exactly two zeros of $\Delta_{K_2}(t)$ on the unit circle, namely
 \[ t_{2,\pm}=\frac{1}{2}\left(x_2\pm i \sqrt{|x_2^2-4|}\right), \mbox{ with }x_2=\frac{1}{2}\left(-7+3\sqrt{13}\right).\]
Using $\s(K_2)=2$ we compute
\[ \rho(K_2)=\int_{w\in S^1} \s(K_2,w)=\s(K_2)\left(1-\frac{\cos^{-1}(x_2/2)}{\pi}\right)\approx 1.806503....\]
Furthermore using again Proposition  \ref{propu1sign}  we compute $\sum_{k=1}^4 \s(K_1,e^{2\pi i k/4})=(4-1)\cdot 2=6$
and $\sum_{k=1}^3 \s(K_2,e^{2\pi i k/6})=(3-1)\cdot 2=4$.
Accordingly we compute
\[ \ba{rclcl} 4\rho(K_1)-\sum_{k=1}^4 \s(K_1,e^{2\pi ik/4}) &\approx &4\cdot 1.645123-6&=&0.580492\\[0.1cm]
3\rho(K_2)-\sum_{k=1}^3 \s(K_2,e^{2\pi ik/3})&\approx&3\cdot 1.806503-4&=&1.419509,
\ea \]
This shows, that $X(K_1)_4$ and $X(K_2)_3$ are not diffeomorphic. Note that this could not be detected using the $\tau$--invariant.

We also compute $\sum_{k=1}^8 \s(K_1,e^{2\pi i k/8})=14$ and  $\sum_{k=1}^6 \s(K_2,e^{2\pi i k/6})=10$.
The manifolds $X(K_1)_{8}$ and $X(K_2)_6$ are diffeomorphic.
 Accordingly we compute
\[ \ba{rclcl} 8\rho(K_1)-\sum_{k=1}^8 \s(K_1,e^{2\pi ik/8}) &\approx &8\cdot 1.645123-14&=&-0.839016\\[0.1cm]
6\rho(K_2)-\sum_{k=1}^6 \s(K_2,e^{2\pi ik/6})&\approx&6\cdot 1.806503-10&=&0.839018,
\ea \]
in line with Theorem \ref{mainthm2v2}. Note that this shows that $X(K_1)_{8}$ and $X(K_2)_6$ are orientation--reversing diffeomorphic.

We conclude the discussion of this example with the following lemma.

\begin{lemma}
The knots $K_1=9_{48}$ and $K_2=12^n_{642}$ are not orientation--preserving commensurable.
\end{lemma}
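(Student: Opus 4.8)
The plan is to show that $K_1$ and $K_2$ fail to be orientation--\emph{preserving} commensurable by combining the orientation--sensitivity of the $\rho$--invariant (encoded in the sign $\eps$ of Theorem \ref{mainthm2v2}) with the numerical data already computed in this section. The key observation is that the two knots \emph{are} cyclically commensurable, since $X(K_1)_8\cong X(K_2)_6$, but the diffeomorphism realizing this must reverse orientation. Indeed, for the covers $X(K_1)_8$ and $X(K_2)_6$ we computed
\[ 8\rho(K_1)-\sum_{k=1}^8 \s(K_1,e^{2\pi ik/8})\approx -0.839016,\qquad 6\rho(K_2)-\sum_{k=1}^6 \s(K_2,e^{2\pi ik/6})\approx 0.839018, \]
so these two real numbers are (to the accuracy of the computation) negatives of one another rather than equal. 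By Theorem \ref{mainthm2v2} this forces $\eps=-1$, i.e. every diffeomorphism $X(K_1)_8\to X(K_2)_6$ is orientation reversing.

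The heart of the argument is then to rule out orientation--preserving commensurability for \emph{any} pair of indices $n_1,n_2$, not merely for the pair $(8,6)$ we happen to know. First I would invoke \eqref{equ:43}: since $K_1$ and $K_2$ are admissible, Theorem \ref{mainthm2v2} (via the $\tau$--invariant) shows that whenever $X(K_1)_{n_1}\cong X(K_2)_{n_2}$ we must have $n_1/n_2=4/3$, so the smallest possibility is $(n_1,n_2)=(4,3)$ and all others are multiples $(4k,3k)$. The computation above for $(4,3)$ gives the two $\rho$--quantities as approximately $0.580492$ and $1.419509$, which are neither equal nor negatives of each other, so by Theorem \ref{mainthm2v2} no diffeomorphism $X(K_1)_4\to X(K_2)_3$ exists at all (of either orientation type). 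Hence the only realizable index pairs are of the form $(8k,6k)$ for $k\in\N$, and for each such pair the $\rho$--relation scales linearly: the left--hand quantity becomes $k$ times $(8\rho(K_1)-\sum\s)$ and the right--hand quantity $k$ times $(6\rho(K_2)-\sum\s)$, since both $n_i\rho(K_i)$ and the signature sums $\sum_{j=1}^{n_i}\s(K_i,e^{2\pi ij/n_i})$ multiply by $k$ when $n_i$ is replaced by $kn_i$ (the latter because the multiset of $(kn_i)$--th roots of unity refines that of the $n_i$--th roots).

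Thus for every realizable pair the two sides of the $\rho$--relation are $k$ times two numbers that are negatives of each other but nonzero, so Theorem \ref{mainthm2v2} can only be satisfied with $\eps=-1$; an orientation--preserving diffeomorphism ($\eps=+1$) would force $k(-0.839016)=k(0.839018)$, which is impossible for $k\neq 0$. I would conclude that no finite cyclic covers of $X(K_1)$ and $X(K_2)$ admit an orientation--preserving diffeomorphism, which is exactly the assertion that $K_1$ and $K_2$ are not orientation--preserving commensurable. The main obstacle I anticipate is the bookkeeping needed to justify that the only candidate index pairs are the multiples $(8k,6k)$ and that the signature sums scale correctly under passing from $n_i$ to $kn_i$; once that is in place, the conclusion is forced by the sign discrepancy $0.839016\neq -0.839018$, which is robust and certainly not a numerical coincidence.
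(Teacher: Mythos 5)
Your proposal has a genuine gap --- in fact two, one of which is fatal. First, the deduction that ``the only realizable index pairs are of the form $(8k,6k)$'' is a non sequitur: equation (\ref{equ:43}) constrains the indices to be $(4k,3k)$ for some $k\in\N$, and excluding $k=1$ still leaves every pair $(4k,3k)$ with $k\ge 2$, including $(12,9)$, $(20,15)$, etc., not only the even multiples. Second, and fatally, the claimed scaling law for the signature sums is false: it is contradicted by the computations in this very section, since $\sum_{k=1}^{4}\s(K_1,e^{2\pi ik/4})=6$ while $\sum_{k=1}^{8}\s(K_1,e^{2\pi ik/8})=14\neq 2\cdot 6$, and $\sum_{k=1}^{3}\s(K_2,e^{2\pi ik/3})=4$ while $\sum_{k=1}^{6}\s(K_2,e^{2\pi ik/6})=10\neq 2\cdot 4$. (Exact linearity could never hold: it would force $\rho(K_i)=\frac{1}{n}\sum_{j=1}^{n}\s(K_i,e^{2\pi ij/n})$ to be rational, whereas here $\rho(K_i)$ involves $\arccos$ of an algebraic number divided by $\pi$.) The refinement of the roots of unity adds new evaluation points at which the step function $\s(K_i,\cdot)$ takes its full value, not its average.

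Because of this, your argument never confronts the actual difficulty. Since the signature sums are integers, Theorem \ref{mainthm2v2} with $\eps=+1$ and $(n_1,n_2)=(4n,3n)$ yields only the condition $n\big(4\rho(K_1)-3\rho(K_2)\big)\in\Z$. To exclude \emph{all} $n$ one must show that $4\rho(K_1)-3\rho(K_2)$ is irrational: if it were a nonzero rational $p/q$, the obstruction would simply vanish for every $n$ divisible by $q$, and no amount of decimal computation ($-0.839016$ versus $0.839018$) can rule that out. This is exactly the step the paper's proof supplies and yours lacks: writing $t_i$ for the unit-circle zeros of $\Delta_{K_i}(t)$, one has $e^{2\pi i\rho(K_i)}=t_i^2$, so $n(4\rho(K_1)-3\rho(K_2))\in\Z$ would make $t_1^4t_2^{-3}$ a root of unity; but $[\Q(t_1^4t_2^{-3}):\Q]$ divides $8$, so the order of such a root of unity is bounded (its Euler function must divide $8$), leaving finitely many possibilities, each of which is excluded by a single numerical check. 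Without a substitute for this algebraicity argument, the proposed proof does not establish the lemma.
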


\begin{proof}
Suppose there exist $n_1$ and $n_2$ such that there exists an orientation--preserving diffeomorphism $X(K_1)_{n_1}\to X(K_2)_{n_2}$.
By (\ref{equ:43}) there exists $n\in \N$ such that $n_1=4n$ and $n_2=3n$. It follows from Theorem \ref{mainthm2v2} that
\[ n(4\rho(K_1)-3\rho(K_2))\in \Z.\]
In particular
\[ \left(e^{2\pi i(4\rho(K_1)-3\rho(K_2))}\right)^n=1.\]
We write $t_i:=t_{i+}$. Then
\[ e^{2\pi i\rho(K_1)}=t_{1}^2 \mbox{ and } e^{2\pi i\rho(K_2)}=t_{2}^2.\]
It thus follows from the above that $t_{1}^4t_2^{-3}$ is a root of unity.

We will show that this is not the case. First note that $[\Q(t_{1}^4t_2^{-3}):\Q]$ clearly divides $8$. In particular if $t_{1}^4t_2^{-3}$ is an $n$-th root of unity,
then $\varphi(n)$ has to divide $8$. The only possibilities are $n=2,3,5,8,12,15,16,24$.
Note that
\[ \ba{rclclclclcl} t_1&=&e^{i\varphi_1},& \mbox{ with }&\varphi_1&=&\arccos(x_1/2)&\approx& 0.557439979\\
t_2&=&e^{i\varphi_2},& \mbox{ with }&\varphi_2&=&\arccos(x_2/2)&\approx& 0.303944246.\ea \]
But $480\cdot (4\cdot 0.557439979-3\cdot 0.303944246)$ is not a multiple of $2\pi$.
This concludes the proof of the lemma.
\end{proof}

%
%

\subsection{Examples of non--cyclically commensurable knots}
Let $K$ be an admissible knot with $\rho(K)$ irrational.
(For example $K$ could be any knot with  $\Delta_K(t)=a+(1-2a)^t+at^2$ and $a> 1$.)
Then it follows from Theorem \ref{mainthm2v2} that $K$ is not cyclically commensurable to any knot $J$ with $\rho(J)$ rational. (For example any knot which is slice has the property that $\rho(J)=0$, see \cite{COT04}).
Also note that such knot $K$ is not cyclically orientation--preserving commensurable to $-K$.


\begin{thebibliography}{10}
\bibitem[AR92]{AR92}
I. R. Aitchison and J. H. Rubinstein, {\em
Combinatorial cubings, cusps, and the dodecahedral knots}, Topology '90 (Columbus, OH, 1990), 17--26,
Ohio State Univ. Math. Res. Inst. Publ., 1, de Gruyter, Berlin, 1992.
\bibitem[At76]{At76} M. F. Atiyah, {\em Elliptic operators, discrete groups and
von Neumann algebras}, Asterisque 32--33: 43--72 (1976)
\bibitem[Bl57]{Bl57}
R. Blanchfield, {\em  Intersection theory of manifolds with operators with applications to knot theory}, Ann. of Math. (2) 65 (1957), 340--356.

\bibitem[BBCW10]{BBCW10}
M. Boileau, S. Boyer, R. Cebanu and G. Walsh,
{\em Knot commensurability and the Berge conjecture}, Preprint (2010), to appear in Geometry and Topology

\bibitem[CW03]{CW03}
S. Chang, S. Weinberger, {\em On invariants of Hirzebruch and Cheeger--Gromov},
Geometry and Topology 7: 311 -- 319 (2003)
\bibitem[CG85]{CG85}
J. Cheeger and   M. Gromov, {\em Bounds on the von Neumann dimension of $L\sp 2$-cohomology and
the Gauss-Bonnet theorem for open manifolds}, J. Differential Geom.  \textbf{21},  no. 1 (1985)
1--34
\bibitem[COT04]{COT04}
T. Cochran, K. Orr and P. Teichner, {\em Structure in the classical knot concordance group},
Comment. Math. Helv.  79  (2004), no. 1, 105--123
\bibitem[CT07]{CT07}
T. Cochran and  P. Teichner, {\em Knot concordance and von Neumann $\rho$--invariants},
 Duke Math. Journal 137, no.2 (2007) 337--379
\bibitem[Du02]{Du02}
A. Dubitskas, {\em Certain Diophantine properties of the Mahler measure},  Math. Notes  72  (2002),  no. 5-6, 763--767

\bibitem[Fr05]{Fr05}
S. Friedl, {\em $L^2$-eta-invariants and their approximation by unitary eta-invariants}, Math.
Proc. Camb. Phil. Soc., Vol. 138: 327-338 (2005).
\bibitem[Fr11]{Fr11}
S. Friedl, {\em Commensurability of 3-manifolds and $L^2$--invariants}, unpublished manuscript,\\
\texttt{http://www.mi.uni-koeln.de/\~\,stfriedl/papers/commensurable.pdf}
\bibitem[Ga83]{Ga83} D. Gabai, {\em Foliations and the topology of 3--manifolds}, J. Differential Geom. \textbf{18} (1983), no. 3, 445--503.
\bibitem[GHH08]{GHH08}
O. Goodman, D. Heard, and C. Hodgson, {\em Commensurators of Cusped Hyperbolic Manifolds},
Experimental Mathematics, Vol. 17 no 3 (2008), pp 283-306.

\bibitem[Go77]{Go77}
C. McA. Gordon, {\em Some aspects of classical knot theory. Knot theory (Proc. Sem.,
Plans-sur-Bex, 1977)}, Lecture Notes in Math. 685:1--65 (1978)
\bibitem[Hi02]{Hi02}
J. Hillman, {\em Algebraic invariants of links}, Series on Knots and Everything, 32. World
Scientific Publishing Co., Inc., River Edge, NJ, 2002.
\bibitem[Ho10]{Ho10}
N. Hoffman, {\em Commensurability classes containing three knot complements}, Alg. \& Geom. Topol.
10 (2010), 663-677.
\bibitem[Ke75a]{Ke75a}
C. Kearton, {\em Blanchfield duality and simple knots}, Trans. Amer. Math. Soc. 202 (1975), 141--160.
\bibitem[Ke75b]{Ke75b}
C. Kearton, {\em Cobordism of knots and Blanchfield duality}, J. London Math. Soc. (2) 10 (1975), no. 4, 406--408.
\bibitem[Le69]{Le69}
J. Levine, {\em Knot cobordism groups in codimension two}, Commentarii Mathematici Helvetici 44:
229-244 (1969)
\bibitem[Le89]{Le89}
J. Levine, {\em Metabolic and hyperbolic forms from knot theory}, J. Pure Appl. Algebra 58 (1989), no. 3, 251--260.
\bibitem[LZ06]{LZ06}
W. Li and  W. Zhang, {\em An $L^2$-Alexander invariant for knots},  Commun. Contemp. Math. 8
(2006), no. 2, 167--187
\bibitem[L\"u02]{Lu02} W. L\"uck, {\em
 $L\sp 2$-invariants: theory and applications to geometry and $K$-theory}, Ergebnisse
der Mathematik und ihrer Grenzgebiete. 3. Folge. A Series of Modern Surveys in Mathematics, 44.
Springer-Verlag, Berlin, 2002.
\bibitem[LS99]{LS99}
W. L\"uck and  T. Schick, {\em $L\sp 2$-torsion of hyperbolic manifolds of finite volume}, Geom.
Funct. Anal.  9  (1999),  no. 3, 518--567.
\bibitem[NR92]{NR92}
W. Neumann and A. Reid, {\em Arithmetic of hyperbolic manifolds},  Topology '90 (Columbus, OH, 1990),  273--310, Ohio State Univ. Math. Res. Inst. Publ., 1, de Gruyter, Berlin, 1992.
\bibitem[RW08]{RW08}
A. Reid and  G. Walsh, {\em Commensurability classes of 2-bridge knot complements},
Algebraic and Geometric Topology Vol 8 (2008) 1031--1057.
\bibitem[Sc95]{Sc95}
K. Schmidt, {\em Dynamical systems of algebraic origin}, Progress in Mathematics, 128. Birkh\"auser Verlag, Basel, 1995.

\bibitem[SW02]{SW02}
D. Silver and S. Williams, {\em Mahler measure, links and homology growth}, Topology 41 (2002), 979--991.
\bibitem[SW04]{SW04}
D. Silver and  S. Williams, {\em Mahler measure of Alexander polynomials}, J. London Math. Soc.
(2) 69 (2004), 767-782.
\bibitem[SW06]{SW06}
D. Silver and S. Williams, {\em Lifting representations of $\Z$-groups},
Israel J. Math. 152 (2006), 313--331.

\bibitem[St62]{St62}
J. Stallings, {\em On fibering certain 3--manifolds}, Topology of 3--manifolds and related
topics (Proc. The Univ. of Georgia Institute, 1961) pp. 95--100 Prentice-Hall, Englewood Cliffs,
N.J. (1962)
\bibitem[Th86]{Th86}
W. P. Thurston, {\em A norm for the homology of 3--manifolds}, Mem. Amer. Math. Soc. \textbf{59}
(1986), no. 339, i--vi and 99--130.
\bibitem[Tr69]{Tr69}
A. Tristram, {\em Some cobordism invariants for links}, Proc. Camb. Phil. Soc. 66 (1969), 251-264
\bibitem[Tu01]{Tu01} V. Turaev, {\em Introduction to combinatorial torsions}, Birkh\"auser, Basel, (2001)

%



 \end{thebibliography}
\end{document}